\newtheorem{definition}{Definition} 
\newtheorem{proposition}[definition]{Proposition} 
\newtheorem{theorem}[definition]{Theorem} 
\newtheorem{lemma}[definition]{Lemma} 
\newtheorem{corollary}[definition]{Corollary}
\newtheorem{remark}[definition]{Remark}
\newtheorem{wedgelemma}[definition]{Wedge Lemma} 
\newtheorem{example}[definition]{Example}
\newcommand{\rar}{\rightarrow}
\newcommand{\RR}{\mathbb{R}}
\newcommand{\NN}{\mathbb{N}}
\newcommand{\ZZ}{\mathbb{Z}}
\renewcommand{\dim}{\mathsf{dim}\ }
\newcommand{\lk}{\mathsf{lk}}
\newcommand{\sd}{\mathsf{sd}}
\newcommand{\Pa}{\mathrm{P}}
\newcommand{\dotcup}{\ensuremath{\mathaccent\cdot\cup}}
\newcommand{\Hom}{\mathrm{ Hom}}
\def\A{\mathcal{A}}
\def\C{\mathcal{C}}
\def\D{\Delta}
\def\D'{\Delta}
\def\H{\mathcal{H}}
\def\N{\mathbb{N}}
\def\U{\mathcal{U}}
\title{Hypergraph Coloring Complexes}
\author{Felix Breuer, Aaron Dall, Martina Kubitzke}
\address{F. B.: Fachbereich Mathematik und Informatik, Freie Universit\"at Berlin, Arnimallee 3, D-14195 Berlin}
\address{A. D.: Departament de Matem\`{a}tica Aplicada II, Universitat Polit\`{e}cnica de Catalunya, 
Jordi Girona 1-3, E-08034 Barcelona}
\address{M. K.: Fakult\"at f\"ur Mathematik, Universit\"at Wien, Garnisongasse 3, A-1090 Wien}
\keywords{hypergraph, coloring complex, chromatic polynomial, Ehrhart theory, Cohen-Macaulay, wedge lemma}
\subjclass[2000]{05C65, 05C15, 05E45, 52B05}
\begin{document}

\begin{abstract}

The aim of this paper is to generalize the notion of the coloring complex of a graph to hypergraphs. We present 
three different interpretations of those complexes -- a purely combinatorial one and two geometric ones. It is shown, 
that most of the properties, which are known to be true for coloring complexes of 
graphs, break down in this more general setting, e.g., Cohen-Macaulayness and partitionabilty. Nevertheless, we 
are able to provide bounds for the $f$- and $h$-vectors of those complexes which yield new bounds on chromatic polynomials 
of hypergraphs. Moreover, though it is proven that the coloring complex of a hypergraph has a wedge decomposition, we provide an example showing that in general this decomposition is not homotopy equivalent to a wedge of spheres. In addition, we can completely characterize those hypergraphs whose coloring complex is connected.
\end{abstract}

\maketitle

\section{Introduction}

Graph colorings have been studied intensively since the mid-nineteenth century. 
One common approach to solving problems regarding either the chromatic number or the chromatic polynomial of a graph is to transfer the graph theoretic problem into the languages of topology and algebraic combinatorics.
For example, given a graph $G$ one can construct several simplicial complexes that give information about the chromatic number of $G$;
these include the neighborhood complex \cite{Lovasz,XieLiu}, the $\Hom$ complex \cite{BabsonKozlov2006,BabsonKozlov2007,Lovasz}, 
and the coloring complex of $G$ \cite{Steingrimsson}.
 Moreover, the coloring complex of $G$ encodes the chromatic polynomial $\chi_G(k)$  of $G$ (up to a shift) as the Hilbert polynomial of its Stanley-Reisner ideal. In particular, $$\frac{1}{z} \sum_{k\geq 0} (k^n - \chi_G(k)) z^k =\frac{h_0+h_1z+\ldots+h_{n-2}z^{n-2}}{(1-z)^n},$$ where $n$ is the number of vertices of $G$ and $h_0,\ldots,h_{n-2}$ is the $h$-vector of the coloring complex of $G$, see \cite[Theorem 13]{Steingrimsson}.
 A good deal of research has gone into the study of the topology of these complexes (see \cite{HershSwartzCol,Hultman,Jonsson}) which has led to bounds on the coefficients of the chromatic polynomial of $G$. In \cite{Hanlon} the coefficients of the chromatic polynomial were interpreted by means of the Hodge decomposition of the unique nontrivial homology group of the coloring complex.

 In this paper, we consider a generalization of coloring complexes to hypergraphs, originally introduced in \cite{LongRundell}. 
We first construct these hypergraph coloring complexes as abstract simplicial complexes via their combinatorics.
Then we realize them geometrically in two ways via (1) hyperplane arrangement decompositions of the sphere and (2) inside-out polytopes and Ehrhart theory and show that these constructions all yield the same simplicial complex, $\Delta_{H}$. This complex has the same relationship to the chromatic polynomial $\chi_H$ of a hypergraph $H$ as in the case of graphs: If $n$ is the number of vertices of $H$ and the maximal cardinality of an edge of $H$ is $m$, then $\Delta_{H}$ is a simplical complex of dimension $n-m-1$ with $h$-vector $(h_0,\ldots,h_{n-m})$ and we have
\begin{eqnarray}
\frac{1}{z} \sum_{k\geq 0} (k^n - \chi_H(k)) z^k =\frac{h_0+h_1z+\ldots+h_{n-m}z^{n-m}}{(1-z)^n}.
\label{eqn:chi-and-h}
\end{eqnarray}

As it turns out, coloring complexes of hypergraphs are much more intricate than coloring complexes of ordinary graphs. We show that most of the properties, which are known to be true for coloring complexes of graphs, break down in this more general setting. In general, hypergraph coloring complexes are neither pure nor connected, they are not Cohen-Macaulay, they are not partitionable and they do not have a non-negative $h$-vector. We also obtain some positive results, for example, we give bounds on the $f$-vectors of coloring complexes, which yield bounds on the chromatic polynomial of a hypergraph, and we characterize when hypergraph coloring complexes are connected. Finally, we provide an example of a hypergraph coloring complex that --~though being connected~-- is not homotopy equivalent to a wedge of spheres.

The paper is organized as follows. 
In Section 2 we provide the necessary background on simplicial complexes, hypergraphs, and Ehrhart theory. In Section \ref{subsect:combCol} we begin with a purely combinatorial definition of the hypergraph coloring complex, $\Delta_{H}$, of a hypergraph $H$. 
After giving some illuminating examples and fixing notation, we give an interpretation of $\Delta_{H}$ in terms of subspace arrangements (Theorem \ref{SubspaceArrangementInterpretation}) that is a generalization of the hyperplane arrangement interpretation of the coloring complex of a graph given in \cite{HershSwartzCol}.
Using this interpretation, we prove in Theorem \ref{prop:nonCM} that hypergraph coloring complexes are not, in general, Cohen-Macaulay.

In Section \ref{sec:ehrhart}, we give a third interpretation of the coloring complex in terms of inside-out polytopes and Ehrhart theory. 
After a brief review of $f$- and $h$-vectors of polytopal complexes and polynomials, we compute the $f$- and $h$-vectors of certain subcomplexes of the coloring complex and apply these results to give upper and lower bounds on the $f$-vector of the chromatic polynomial in theorems in Theorems~\ref{thm:subgraph} and \ref{thm:f-vector} and Corollary \ref{cor:f-vector}. We conclude this section by observing that the $h$-vector of the coloring complex may have negative entries.

Finally, in Section \ref{sec:homtype}, we analyze the homotopy type of the coloring complex of a hypergraph. 
As a consequence of the Wedge Lemma \cite[Wedge Lemma 6.1]{HRW-Koszul}, we obtain a wedge decomposition of $\Delta_{H}$ in Proposition \ref{prop:homotopy}. Unfortunately, the complexes appearing in this decomposition are not single spheres, but they are joins of spheres with certain order complexes which heavily depend on the structure of the underlying hypergraph. Even though those order complexes can be associated to smaller hypergraphs again, it is not clear, in general, what they look like.  We then give a characterization of connectedness of $\Delta_{H}$ in terms of the underlying hypergraph (see Proposition \ref{prop:connectedness}). We conclude the article by constructing a hypergraph coloring complex that does not have the homotopy type of a wedge of spheres.	

\section{Basic definitions and preliminaries}
In this section, we provide the basic definitions and facts which will be needed for the understanding of this paper. More specific notions and 
results which are only important in special places are stated within the corresponding section.

\subsection{Simplicial Complexes} \label{subsect:simplicial}

For a positive integer $n\in \N$ we use $[n]$ to denote the set $\{1,\ldots,n\}$. 
An \emph{(abstract) simplicial complex} $\Delta$ on vertex set $[n]$ is a collection of subsets of $[n]$ such that $\emptyset\in \Delta$ 
and if $F\in \Delta$ and $G\subsetneq F$, then also $G\in \Delta$. The elements of $\Delta$ are called \emph{faces} of $\Delta$. 
Faces which are singletons and inclusion wise maximal faces are referred to as \emph{vertices} and \emph{facets} of $\Delta$, 
respectively. 
The \emph{dimension} of a face $F\in \Delta$ equals its cardinality minus $1$ and the \emph{dimension} $\dim\Delta$ of $\Delta$ is the maximum 
dimension of its faces. If all facets of $\Delta$ are of the same dimension, then we call the simplicial complex \emph{pure}. 
The information about the numbers of faces of a certain dimension of a $(d-1)$-dimensional simplicial complex $\Delta$ is collected 
in its \emph{$f$-vector} $f(\Delta)=(f_{-1}(\Delta),f_0(\Delta),\ldots, f_{d-1}(\Delta))$, where
\begin{equation*}
 f_i(\Delta)=\#\{F\in \Delta~:~\dim F=i\}
\end{equation*}
for $-1\leq i\leq d-1$. For several purposes, it is more convenient to consider the so-called \emph{$h$-vector} of $\Delta$ which is 
the vector $h(\Delta)=(h_0(\Delta),\ldots,h_d(\Delta))$ determined by the relation
\begin{equation} \label{eq:ftoh}
 \sum_{i=0}^{d} h_i(\Delta) t^{d-i } =\sum_{i=0}^{d} f_{i-1}(\Delta) (t-1)^{d-i}.
\end{equation}

For a face $F  \in \Delta$, we write $\lk_\Delta(F) = \{ G \in \Delta~:~F \cap G = \emptyset,~F \cup G \in \Delta \}$ 
for the \emph{link} of $F$ in $\Delta$ and we denote by $\partial F$ the simplicial complex of all $G \subsetneq F$ 
that lie in the \emph{boundary} of the simplex $F$. We use $\Delta^n$ to denote the standard $(n-1)$-simplex, i.e., $\Delta^n=2^{[n]}$. 
Given two simplicial complexes $\Delta$ and $\Gamma$ the \emph{join} of $\Delta$ and $\Gamma$ is the simplicial complex given as 
$\Delta\ast\Gamma=\{F\cup G~:~F\in\Delta,~G\in\Gamma\}$. 
The \emph{barycentric subdivision} $\sd(\Delta)$ of $\Delta$ is the simplicial complex on vertex set 
$\Delta \setminus \{ \emptyset\}$ whose simplices
are flags $A_0 \subsetneq A_1 \subsetneq \cdots \subsetneq A_i$ of 
elements $A_j \in \Delta\setminus\{\emptyset\}$, for $0 \leq j \leq i$. 

In several parts of this paper we are interested in certain (topological) properties of simplicial complexes such as shellability, 
partitionability and Cohen-Macaulayness. We now recall those notions. 
A pure simplicial complex $\Delta$ is called \textit{shellable} if 
there exists a linear order $F_1,\ldots,F_m$ of the facets of $\Delta$ such that 
$\langle F_i\rangle \cap \langle F_1,\ldots,F_{i-1}\rangle$ is generated by a non-empty set of maximal proper 
faces of $\langle F_i\rangle$ for all $2\leq i\leq m$. Here, $\langle F_i\rangle$ and $\langle F_1,\ldots, F_{i-1}\rangle$, 
denote the simplicial complexes whose faces are all faces of $F_i$ and all faces of any of the $F_1,\ldots,F_{i-1}$, respectively. 
The linear order $F_1,\ldots,F_m$ is called a \textit{shelling} of $\Delta$. 
It is a well-known fact that a shellable simplicial complex $\Delta$ is in particular partitionable. Recall, that $\Delta$ 
is called \emph{partitionable} if $\Delta$ can be written as a disjoint union 
$\Delta=[G_1,F_1]\dotcup\cdots \dotcup [G_m,F_m]$, where $F_1,\ldots,F_m$ are the facets of $\Delta$ and 
$[F,G]=\{H~:~F\subseteq H\subseteq G\}$ is the closed interval from $F$ to $G$. 
Besides being partitionable, shellable simplicial complexes are also known to be Cohen-Macaulay over any field. For our purposes, it will be 
convenient to use the following characterization of the Cohen-Macaulay property due to Reisner. 

\begin{theorem}\cite[Corollary 5.3.9]{BH-book}\label{th:Reisner}
 Let $\Delta$ be a simplicial complex on vertex set $[n]$ and let $k$ be an arbitrary field. Then $\Delta$ is Cohen-Macaulay over $k$ 
if and only if 
\begin{equation*}
\widetilde{H}_i\left(\lk_{\Delta}(F);k\right)=0
\end{equation*}
for all $F\in\Delta$ and all $i<\dim\left(\lk_{\Delta}(F)\right)$.
\end{theorem}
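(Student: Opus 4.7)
The plan is to use the Stanley-Reisner correspondence to convert the Cohen-Macaulay condition into a topological condition on links. Write $S = k[x_1,\ldots,x_n]$ and let $k[\Delta] = S/I_\Delta$ be the Stanley-Reisner ring, where $I_\Delta$ is generated by the squarefree monomials corresponding to non-faces of $\Delta$. By definition, $\Delta$ is Cohen-Macaulay over $k$ exactly when $k[\Delta]$ is a Cohen-Macaulay module, equivalently when $\operatorname{depth}_{\mathfrak{m}} k[\Delta] = \dim k[\Delta] = \dim\Delta + 1$, where $\mathfrak{m} = (x_1,\ldots,x_n)$.

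The first step would be to convert depth to local cohomology via Grothendieck's theorem: $\operatorname{depth}_{\mathfrak{m}} k[\Delta] \geq d$ iff $H^i_{\mathfrak{m}}(k[\Delta]) = 0$ for all $i < d$. The advantage of this reformulation is that each $H^i_{\mathfrak{m}}(k[\Delta])$ inherits the $\mathbb{Z}^n$-grading of $k[\Delta]$ and can be computed degreewise from the \v{C}ech complex on $x_1,\ldots,x_n$.

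The key step is Hochster's formula, which I regard as the main technical obstacle. It gives an explicit identification, for each multidegree $a \in \mathbb{Z}^n$, of $H^i_\mathfrak{m}(k[\Delta])_a$ with a reduced homology group of $\lk_\Delta(\sigma)$, where $\sigma = \{i : a_i < 0\}$ is the negative support (the piece being zero unless $a \leq 0$ and $\sigma \in \Delta$):
$$H^i_{\mathfrak{m}}(k[\Delta])_a \;\cong\; \widetilde{H}_{i - |\sigma| - 1}\bigl(\lk_\Delta(\sigma);k\bigr).$$
I would prove this by writing down the \v{C}ech complex $C^\bullet(x_1,\ldots,x_n; k[\Delta])$, examining in each multidegree which monomials survive the localizations (only those with the correct sign pattern on the support, and whose squarefree support is a face of $\Delta$), and recognizing the resulting complex as a shift of the reduced chain complex of $\lk_\Delta(\sigma)$. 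The bookkeeping is delicate but routine.

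Granted Hochster's formula, the Cohen-Macaulay condition $H^i_\mathfrak{m}(k[\Delta]) = 0$ for all $i < \dim\Delta + 1$ translates immediately, after reindexing $j = i - |\sigma| - 1$, into the vanishing of $\widetilde{H}_j(\lk_\Delta(\sigma); k)$ for all $\sigma \in \Delta$ and all $j$ in the appropriate range. The bound $j < \dim \lk_\Delta(\sigma)$ in the theorem statement matches this after one observes that Cohen-Macaulay complexes are automatically pure, so that $\dim\lk_\Delta(\sigma) = \dim\Delta - |\sigma|$ for every face $\sigma$; this takes care of the ``only if'' direction, and the same Hochster identification run in reverse supplies the ``if'' direction, completing the passage from algebra to the stated combinatorial-topological criterion.
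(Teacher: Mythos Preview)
The paper does not prove this theorem; it is quoted as a known result from Bruns--Herzog and used as a black box (specifically in the proof of Proposition~\ref{prop:nonCM}). So there is no in-paper argument to compare your proposal against.

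That said, your sketch via Hochster's formula for the $\mathbb{Z}^n$-graded local cohomology of the Stanley--Reisner ring is the standard route, and is essentially the argument found in Bruns--Herzog. One small gap: in the ``if'' direction you need Reisner's vanishing (below $\dim\lk_\Delta(\sigma)$) to imply the a priori stronger Hochster vanishing (below $\dim\Delta - |\sigma|$). You equate these bounds by invoking purity, but ``CM $\Rightarrow$ pure'' is only available in the ``only if'' direction; for the converse you must argue directly that Reisner's condition forces purity. This is easy---if $F$ were a facet of non-maximal dimension, a codimension-one face $\sigma\subset F$ would have $\lk_\Delta(\sigma)$ containing an isolated vertex together with something of positive dimension, so $\widetilde{H}_0(\lk_\Delta(\sigma))\neq 0$---but it should be said. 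With that patched, your outline is correct.
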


Note that, it follows from this criterion that Cohen-Macaulayness is a topological property. Since $\Delta$ and $\sd(\Delta)$ have homeomorphic
geometric realizations this in particular means that either both complexes are Cohen-Macaulay or none of them is. 
Though the class of shellable simplicial complexes is contained in both, the class of partitionable and the class of Cohen-Macaulay 
complexes, there is no exact relationship known between these two classes. On the one hand, there exists a wide variety of partitionable 
complexes which are not Cohen-Macaulay. On the other hand, it is conjectured that every Cohen-Macaulay complex is partitionable, see e.g., 
\cite{Stanley96}.  
For more information on simplicial complexes we refer the reader to \cite{BH-book,Stanley96}. 

We proceed by recalling some notions from combinatorial topology, see e.g., [11, 17] for a
more thorough treatment of this topic. 
Given a regular cell complex $\Delta$, we call a finite collection $\Delta_1,\ldots, \Delta_l$ of closed subcomplexes 
of $\Delta$ a \emph{covering} $\U$ of $\Delta$ if $\Delta=\Delta_1\cup\cdots\cup\Delta_l$.
The \emph{intersection poset} $P^{\U}$ of the covering $\U$ is the poset whose elements are the intersections $\Delta_J=\bigcap_{i\in J}\Delta_j$, 
where $J\subseteq [l]$,  
which are ordered by reverse inclusion. For $p\in P^{\U}$ we write $U_p$ for the subcomplex of $\Delta$ corresponding to 
the intersection $p$. 
For a poset $P$ and $p\in P$ we let $P_{<p}=\{q\in P~:~q<p\}$ denote the \emph{open lower order ideal} of $p$ in $P$. Similarly, 
$P_{\leq p}$ denotes the \emph{closed lower order ideal} of $p$ in $P$. 
It is common to associate to a poset $P$ its so-called \emph{order complex} $\Delta(P)$, which is the simplicial 
complex on vertex set $P$ whose faces are chains in $P$. 
Note that the barycentric subdivision of a simplicial complex $\Delta$ is the order complex of the face poset of $\Delta$ after the 
removal of the minimum element $\emptyset$.

\subsection{Hypergraphs}
The central object of study of this work are hypergraphs. A simple \emph{hypergraph} $H=(V,E)$ consists of a finite set $V$ of 
\emph{vertices} of $H$ and a collection $E$ of non-empty subsets of $V$, called \emph{edges}. If all edges of $H$ have size two, 
then $H$ is an ordinary graph. We will always assume that 
$H$ has neither isolated vertices nor loops, i.e., edges of cardinality $1$.  
Moreover, we exclude hypergraphs having a pair of edges $F,F'$ such that $F\subsetneq F'$. Mostly, we will consider hypergraphs on vertex set $[n]$. 
If $E'\subseteq E$ is a subset of the edge set of $H=(V,E)$, we define $H_{E'}$ to be the induced subhypergraph of $G$ which has vertex set 
$V_{H_{E'}}=\bigcup_{F\in E'}F$ and edge set $E_{H_{E'}}=E'$. 
A hypergraph $H$ is called \emph{$s$-uniform} if all edges have the same cardinality $s$. 
An \emph{$s$-regular} hypergraph $H=(V,E)$ is a hypergraph such that each vertex $i\in V$ is contained in exactly $s$ edges of $H$. 

We are interested in colorings of hypergraphs and their chromatic polynomials. 
For $k\in\NN$, a \emph{$k$-coloring} of a hypergraph $H=(V,E)$ is just a function $c:V\rar[k]$. Such a $k$-coloring 
$c:V\rar[k]$ is called \emph{proper} if for every edge $F$ there exist vertices $v,w\in F$ such that $c(v)\not=c(w)$. 
All colorings studied in this paper are proper, whence we will often omit this attribute. 
Note that if a hypergraph $H$ has a loop, then $H$ has no proper $k$-colorings for any $k$; therefore we restrict our attention to 
hypergraphs without loops. It is important to emphasize that we require only two vertices of different colors to lie in each edge, 
we do \emph{not} demand all vertices in an edge to have pairwise distinct colors.
\footnote{The latter notion can be captured with proper colorings of ordinary graphs by replacing each edge with a clique on the same vertex set. 
As we wish to study a concept that is strictly more general, we only require edges not to be colored monochromatically.}

Let $H=(V,E)$ be a hypergraph. Consider the function $\chi_H$ that assigns to every $k\in\NN$ the number $\chi_H(k)$ of proper $k$-colorings of $H$. 
Just as in the case of ordinary graphs, $\chi_H(k)$ is a polynomial in $k$, called the \emph{(hypergraph) chromatic polynomial} of $H$ 
(see  e.g., \cite{Birkhoff1912, Wilf}). The fact that $\chi_H$ is a polynomial also follows directly from the geometric considerations in Section~\ref{sec:ehrhart}.

\subsection{Ehrhart theory and geometry}

In this article we consider simplicial complexes not only as abstract combinatorial objects but also as geometric objects. 
Recall that a \emph{polyhedron} in $\RR^n$ is the intersection of finitely many closed halfspaces in $\RR^n$ and that a \emph{polytope} is a bounded polyhedron (see \cite{Ziegler95} for other terminology regarding polyhedra). A \emph{polyhedral complex} is a set 
$\C$ of finitely many polyhedra in some $\RR^n$ such that if $P,Q\in\C$, then $P\cap Q\in \C$ and $P\cap Q$ is a face of both $P$ and $Q$. 
A \emph{polytopal complex} is a polyhedral complex in which all faces are polytopes and a \emph{(geometric) simplicial complex} is a polytopal 
complex in which all faces are simplices. Every geometric simplicial complex induces an abstract simplicial complex on its vertex set. 
The \emph{support} of a polyhedral complex $\C$ is $\bigcup_{P\in\C} P$, i.e., the underlying subset of $\RR^n$. A \emph{subdivision} 
$\C'$ of a polyhedral complex $\C$ is a polyhedral complex with $\bigcup_{P'\in\C'} P'= \bigcup_{P\in\C} P$ such that every 
$P'\in\C'$ is contained in some $P\in\C$. If $\C'$ is simplicial, then $\C'$ is also called a \emph{triangulation}. 
The \emph{intersection} of two polyhedral complexes $\C,\C'$ is the polyhedral complex $\C\cap\C'=\{ P\cap P': P\in\C, P'\in\C'\}$. 

A \emph{subspace arrangement} $\A$ is a finite collection of affine subspaces in some $\RR^n$. A \emph{hyperplane arrangement} 
$\H$ is a subspace arrangement in which every subspace is an affine hyperplane. A frequently and well-studied hyperplane 
arrangement is the so-called \emph{braid arrangement} in $\RR^n$, which is the collection of hyperplanes 
$\{H_{ij}~:~ i,j\in[n], i\not=j\}$, where $H_{ij}=\{x\in\RR^n~:~ x_i=x_j\}$. 
Every hyperplane arrangement $\H$ induces a polyhedral complex $\C_\H$ which is a subdivision of $\RR^n$. 
Given a polyhedral complex $\C$ and a hyperplane arrangement $\H$, the \emph{subdivision of $\C$ induced by $\H$} is 
the intersection of $\C$ and $\C_\H$. 

For every $X\subset\RR^n$, the \emph{Ehrhart function} $L_X:\NN\rar\NN$ assigns to every $k\in \NN$ the number $L_X(k)=\#(k\cdot X \cap \ZZ^n)$ 
of integer points in the $k$-th dilate of $X$. A \emph{lattice polytope} is a polytope whose vertices have only integer coordinates. 
It is a fundamental result of Ehrhart that if $X$ is a lattice polytope, then $L_X(k)$ is a polynomial in $k$, or, more precisely, 
there is a polynomial $p(k)$ such that $L_X(k)=p(k)$ for every $k\in\NN$, see \cite[Theorem 3.8]{BeckRobins07}. 

Two polytopes $P,Q$ are \emph{lattice equivalent} if there exists a an isomorphism $f\in GL(n,\ZZ)$ with $P=f(Q)$. 
Lattice equivalent polytopes have the same Ehrhart function. A $d$-simplex is \emph{unimodular} if it is lattice equivalent to a standard simplex. 
Here, a \emph{standard simplex} in $\RR^n$ refers to a simplex whose vertex set is a subset of the $n$ standard unit vectors in $\RR^n$ and the origin. 
Every abstract simplicial complex can be realized as a geometric simplicial complex in which every simplex is unimodular. Such a geomoetric realization 
will be referred to as \emph{unimodular}. 
When we speak of the Ehrhart function of an abstract simplicial complex $\Delta$, we mean the Ehrhart function of any unimodular geometric realization of $\Delta$; the Ehrhart function of a unimodular realization is independent of the particular choice of unimodular realization.

\section{The combinatorial hypergraph coloring complex}\label{subsect:combCol}
\label{sec:combinatorial}

In this section, we introduce the so-called \emph{(combinatorial) coloring complex} associated to a hypergraph and investigate some of its 
properties. 
The given construction is a natural generalization of the coloring complex of a graph, see e.g., 
\cite{Hultman,Jonsson,Steingrimsson}. In particular, for an ordinary graph we rediscover its usual coloring complex. 
Though the latter one has been shown to exhibit fairly nice properties, e.g., shellability, this is no longer true in general when passing to 
arbitrary hypergraphs. 

Let $\Pa\left([n]\right)$ denote the set of ordered set partitions of $[n]$ having no empty block. 
We define an ordering relation $\preceq$ on $\Pa\left([n]\right)$ in the following way.
A partition $B=B_1|\cdots|B_r$ covers exactly those partitions which can be obtained by 
taking the union of two adjacent blocks of $B$, i.e., all partitions $B_1|\cdots|B_{i-1}|B_i\cup B_{i+1}|B_{i+2}|\cdots|B_r$ for $1\leq i\leq r-1$.
It is straightforward to verify that -- endowed with this ordering relation -- each interval in $\Pa\left([n]\right)$ is isomorphic to a Boolean lattice. 
Moreover, $\Pa\left([n]\right)$ has a minimum element which is the partition consisting of the single block $[n]$.

We now state the definition of the combinatorial hypergraph coloring complex. Equivalent definitions in geometric terms are given in Sections~\ref{subsect:arrangement} and \ref{subsect:ColEhrhart}.

\begin{definition}
Let $H=([n],E)$ be a hypergraph. The \emph{(combinatorial) hypergraph coloring complex} $\Delta_H$ associated to $H$ is the 
simplicial complex whose $(r-2)$-dimensional faces are set partitions $B_1|B_2|\cdots|B_r$ of $\Pa([n])$ such that 
there exists at least one block $B_i$ (for some $1\leq i\leq r$) containing an edge of $H$. 
The containment relation between two faces is defined via the ordering $\preceq$ on $\Pa([n])$. 
\end{definition}

It directly follows from the definition that facets of $\Delta_H$ are those set partitions which are comprised of 
one block equal to a certain edge and singleton blocks otherwise. 
We make the following two fundamental observations for the coloring complex of a hypergraph $H=([n],E)$. 

\begin{remark}\label{rem:pure}
 \begin{itemize}
  \item[(i)] Let $m=\min\{\#F~:~F\in E\}$ be the minimal cardinality of an 
  edge of $H$. Then, the dimension of $\Delta_H$ equals $n-m-1$.
\item[(ii)] Let $H=([n],E)$ be a hypergraph having no pair of edges such that one is properly contained in the other. Then, 
$\Delta_H$ is a pure complex of dimension $n-1-s$ if and only if $H$ is $s$-uniform.
 \end{itemize}
\end{remark}

We will now consider a few simple examples of hypergraph coloring complexes. Those will also be used to fix 
some further notation.

\begin{example} \label{Ex1}
\begin{itemize}
  \item[(i)] If $H=([n],E)$ is an ordinary graph, then it directly follows from the definition that the 
hypergraph coloring complex $\Delta_H$ coincides with the usual coloring complex, which was introduced by Steingr\'{i}msson in \cite{Steingrimsson}.
  \item[(ii)] Consider a hypergraph $H=([n],E)$ which consists of just one edge and isolated vertices otherwise. For instance, let
 $E=\{[s]\}$. Then, the partition 
\begin{equation*}
 A=[s]|\{s+1\}|\cdots|\{n-1\}|\{n\}
\end{equation*}
defines a facet of $\Delta_H$ and any facet of $\Delta_H$ can be obtained from $A$ by permuting the order of its blocks. Moreover, each 
such reordering of the blocks of $A$ yields a facet of $\Delta_H$. Thus, $\Delta_H$ has exactly $(n-s+1)!$ facets. The same arguments as in 
the proof of Theorem 14 in \cite{Steingrimsson} show that -- as a simplicial complex -- $\Delta_H$ is isomorphic to the barycentric subdivision of 
the boundary of an $(n-s)$-simplex and as such is homeomorphic to an $(n-s-1)$-sphere.
\end{itemize}
\end{example}

For a hypergraph $H=(V,E)$ and any edge $F\in E$, we set
\begin{equation}
Q_F:=\{B_1|\cdots|B_r\in \Delta_H~:~\mbox{there exists }1\leq i\leq r \mbox{ such that } F\subseteq B_i\},
\label{eq:edgesphere}
\end{equation}
i.e., $Q_F$ is the set of those faces of the coloring complex $\Delta_H$ which have a block containing $F$.
By definition, $Q_F$ is an $(n-|F|-1)$-dimensional subcomplex of $\Delta_H$, and in Example \ref{Ex1} (ii) we have seen 
that $Q_F$ is homeomorphic to a sphere. Following the notions in \cite{Steingrimsson}, we will refer to such a sphere as an 
\emph{edge sphere}.

\subsection{An arrangement interpretation} \label{subsect:arrangement}

It was shown in \cite[Theorem 1]{HershSwartzCol} that the coloring complex of an ordinary graph can be interpreted in terms 
of certain hyperplane arrangements. The aim of this section is to carry this description over to the coloring 
complex of a hypergraph. The main difference -- though not an astonishing one -- is that arrangements consisting of affine linear spaces of arbitrary dimension, and not just hyperplane arrangements, come into play. We will strongly make use of a result from \cite{HRW-Koszul}.
Before stating this result we need to fix some notation and establish some basics. 

Given a square-free monomial $m=x_{i_1}\cdots x_{i_s}\in \RR[x_1,\ldots,x_n]$, we can assign a linear subspace 
$U_m$ of $\RR^n$ to it by setting $U_m=\{(u_1,\ldots,u_n)\in\RR^n~:~u_{i_1}=\cdots=u_{i_s}\}$. 
In the following, we will give two constructions which associate to a monomial ideal a certain subspace 
arrangement and a simplicial complex, respectively. 
Let $J\subsetneq \RR[x_1,\ldots,x_n]$ be a monomial ideal and consider those minimal generators of $J$ which are square-free, 
say $m_1,\ldots,m_t$. 
The \emph{canonical arrangement} $\A_J$ corresponding to $J$ is the subspace arrangement inside the hyperplane 
$H=\{u=(u_1,\ldots,u_n)\in\RR^n~:~u_1+\cdots +u_n=0\}$ consisting of its intersection with the union of all linear subspaces $U_{m_i}$, 
for $1\leq i\leq t$.

Moreover, as described in \cite{HRW-Koszul}, one can associate to the monomial ideal $J\subsetneq \RR[x_1,\ldots,x_n]$ 
a simplicial complex $\Delta_J$ on vertex set $2^{[n]}-\{\emptyset\}$ in the following way. The 
$(l-1)$-faces of $\Delta_J$ are chains
\begin{equation}\label{eq1}
 \emptyset\neq A_1\subsetneq A_2\subsetneq \cdots \subsetneq A_l\neq [n],
\end{equation}
such that $x^{A_i\setminus A_{i-1}}=\prod_{r\in (A_i\setminus A_{i-1})}x_r\in J$ for some $1\leq i\leq l+1$. 
Here, we set $A_0=\emptyset$ and $A_{l+1}=[n]$. 
By definition, the complex $\Delta_J$ is a subcomplex of the barycentric subdivision of the boundary of an $(n-1)$-simplex. 
In the following, we denote this subdivision by $\sd(\partial\Delta^{n-1})$. 
The next theorem is a special case of Theorem 3.1 in \cite{HRW-Koszul}.

\begin{theorem}\cite[Theorem 3.1]{HRW-Koszul}\label{th:HRW}
Let $J\subsetneq \RR[x_1,\ldots,x_n]$ be a monomial ideal. Then the pair $\left(\sd(\partial\Delta^{n-1}),\Delta_{J}\right)$ 
is homeomorphic to the pair $\left(S^{n-2},S^{n-2}\cap \A_J\right)$, where $S^{n-2}$ is the unit sphere in the hyperplane 
$H=\{u=(u_1,\ldots,u_n)\in\RR^n~:~u_1+\cdots +u_n=0\}$ and $\mathcal{A}_J$ is the canonical arrangement corresponding to $J$.
\end{theorem}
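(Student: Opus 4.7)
The plan is to exhibit an explicit homeomorphism of pairs that realises both sides as one and the same triangulated sphere together with a distinguished subcomplex cut out by the arrangement $\A_J$.

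First, I would identify $\sd(\partial\Delta^{n-1})$ with the triangulation of $S^{n-2}$ induced by the braid arrangement $\{u_i=u_j:i\neq j\}$ restricted to $H$. The open cones of the braid arrangement in $H$ are indexed by ordered set partitions $B_1|\cdots|B_r$ of $[n]$ with $r\geq 2$: the cone associated with $B_1|\cdots|B_r$ is the set of $u\in H$ with $u_i=u_j$ iff $i,j$ lie in the same block and $u_i<u_j$ whenever $i\in B_s$, $j\in B_t$, $s<t$. Intersecting with $S^{n-2}$ turns this cone into an open simplex of dimension $r-2$ whose closure is obtained by merging adjacent blocks. Setting $A_j:=B_1\cup\cdots\cup B_j$ for $1\leq j\leq r-1$ produces a flag $\emptyset\neq A_1\subsetneq\cdots\subsetneq A_{r-1}\neq[n]$ of proper nonempty subsets of $[n]$, and this bijection extends to a simplicial isomorphism between the braid triangulation of $S^{n-2}$ and $\sd(\partial\Delta^{n-1})$.

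Next, I would analyse how the subspaces making up $\A_J$ restrict to this triangulation. For any square-free minimal generator $m=x_{i_1}\cdots x_{i_s}$ of $J$, a point $u$ in the open cell indexed by $B_1|\cdots|B_r$ lies in $U_m$ iff $u_{i_1}=\cdots=u_{i_s}$, which, by the defining conditions on the cell, happens iff $\{i_1,\ldots,i_s\}$ is contained in one of the blocks $B_j$. Hence $\A_J\cap S^{n-2}$ is a union of closed simplices of the braid triangulation, and the open simplex indexed by the flag $A_1\subsetneq\cdots\subsetneq A_{r-1}$ lies in $\A_J$ iff there exist a square-free minimal generator $m$ of $J$ and an index $j\in\{1,\ldots,r\}$ with $\mathrm{supp}(m)\subseteq A_j\setminus A_{j-1}$, where we set $A_0=\emptyset$ and $A_r=[n]$. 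Since $x^{A_j\setminus A_{j-1}}$ is itself square-free, a minimal generator of $J$ divides it only if that generator is square-free, so the condition above is equivalent to $x^{A_j\setminus A_{j-1}}\in J$ for some $j$, which is exactly the defining condition of a face of $\Delta_J$.

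Combining the two steps produces a homeomorphism of pairs $(\sd(\partial\Delta^{n-1}),\Delta_J)\to(S^{n-2},S^{n-2}\cap\A_J)$. The main technical obstacle is the first step: one must verify carefully that the closures of the braid cells are honest simplices, that the face relation on cells corresponds to merging adjacent blocks (equivalently, to deleting a term of the associated flag), and that this simplicial structure really is isomorphic to $\sd(\partial\Delta^{n-1})$ as an abstract simplicial complex. Once this classical identification is in place, the containment criterion derived in the second step delivers the identification of subcomplexes essentially for free.
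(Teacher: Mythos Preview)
The paper does not prove this theorem; it is quoted verbatim as a special case of \cite[Theorem~3.1]{HRW-Koszul} and used as a black box, so there is no proof in the paper to compare your attempt against.

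That said, your outline is correct and is essentially the standard argument for this identification. The braid arrangement in $H$ triangulates $S^{n-2}$ into relatively open simplices indexed by ordered set partitions of $[n]$ with at least two blocks, with face relation given by merging adjacent blocks; passing to the associated flag $A_j=B_1\cup\cdots\cup B_j$ yields precisely the simplicial structure of $\sd(\partial\Delta^{n-1})$. Your second step is also right: a point of the open cell $B_1|\cdots|B_r$ lies in $U_m$ if and only if $\mathrm{supp}(m)$ is contained in a single block, and because the monomial $x^{A_j\setminus A_{j-1}}$ is square-free, membership in $J$ is detected by some square-free minimal generator dividing it, which matches the definition of $\Delta_J$. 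The only place requiring care, as you note, is the classical fact that the braid cell decomposition of $S^{n-2}$ is simplicial with the stated face poset; this is the description of the Coxeter complex of type $A_{n-1}$ and is well known.
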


We will now explain how this result serves our purposes.

Let $H=([n],E)$ be a hypergraph and -- as usual -- assume that none of its edges is properly contained in any other edge. 
The \emph{edge ideal} of $H$ is the monomial ideal $I_H\subsetneq \RR[x_1,\ldots,x_n]$ generated by the monomials 
$x_F=\prod_{i\in F}x_i$, where $F\in E$ is an edge. 
Since a chain as in (\ref{eq1}) can be converted into an ordered set partition 
$A_1|(A_2\setminus A_1)|\cdots |(A_l\setminus A_{l-1})|[n]\setminus A_l$ of $[n]$ 
and vice versa, it follows directly from the definition that $\Delta_{I_H}$ is simplicially isomorphic to the 
hypergraph coloring complex $\Delta_{H}$.
Accessorily, the coloring complex $\Delta_{K_n}$ of the complete graph $K_n$ on $n$ vertices is known to be simplicially isomorphic 
to $\sd(\partial\Delta^{n-2})$, and the canonical arrangement corresponding to $I_{K_n}$ is the usual 
\emph{braid arrangement} in $\RR^n$. 

Combining this argumentation with Theorem \ref{th:HRW} and using the same arguments as in \cite[Theorem 1]{HershSwartzCol} we obtain the desired interpretation of hypergraph coloring complexes in terms of arrangements.

\begin{theorem}\label{SubspaceArrangementInterpretation}
Let $H=([n],E)$ be a hypergraph. 
As a simplicial complex, the hypergraph coloring complex $\Delta_H$ of $H$ is isomorphic 
to the restriction of $\mathcal{A}_{n}\cap S^{n-2}$ to $\A_H=\A_{I_H}$, where $S^{n-2}$ is the unit sphere in the hyperplane 
$\{u=(u_1,\ldots,u_n)\in\RR^n~:~u_1+\cdots +u_n=0\}$ and $\mathcal{A}_{I_H}$ is the canonical arrangement corresponding to $I_H$. Moreover, the pair $(\Delta_{K_n},\Delta_{H})$ is homeomorphic to the pair $(S^{n-2},S^{n-2}\cap\mathcal{A}_H)$.
\end{theorem}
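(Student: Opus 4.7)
The strategy is to recognize the hypergraph coloring complex $\Delta_H$ as the simplicial complex $\Delta_{I_H}$ that \cite{HRW-Koszul} associate to the edge ideal $I_H$ of $H$, and then to invoke Theorem~\ref{th:HRW} with $J = I_H$. The only real work is checking this identification; the genuine topological content is then imported wholesale from the cited theorem.

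First, I would make precise the bijection between chains in $\sd(\partial \Delta^{n-1})$ and ordered set partitions of $[n]$ with at least two blocks. A chain $\emptyset \neq A_1 \subsetneq A_2 \subsetneq \cdots \subsetneq A_l \neq [n]$ is sent to the ordered set partition $\pi = A_1 \mid (A_2 \setminus A_1) \mid \cdots \mid (A_l \setminus A_{l-1}) \mid ([n] \setminus A_l)$ with $l+1$ blocks, and the inverse sends $B_1 \mid \cdots \mid B_r$ to the chain of partial unions $B_1 \subsetneq B_1 \cup B_2 \subsetneq \cdots \subsetneq B_1 \cup \cdots \cup B_{r-1}$. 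Refining a chain by inserting a single new subset corresponds exactly to splitting one block of $\pi$ into two adjacent blocks, which is the inverse of the cover relation defining $\preceq$. Hence this is an isomorphism between the face poset of $\sd(\partial\Delta^{n-1})$ and the face poset of the complex of ordered set partitions of $[n]$ with at least two blocks.

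Second, the defining condition of $\Delta_{I_H}$ translates across this bijection to that of $\Delta_H$. Since $I_H$ is generated by the squarefree monomials $x_F = \prod_{j \in F} x_j$ over edges $F \in E$, the condition $x^{A_i \setminus A_{i-1}} \in I_H$ is equivalent to the existence of an edge $F \in E$ with $F \subseteq A_i \setminus A_{i-1}$. Under the bijection this becomes: some block of $\pi$ contains an edge of $H$, which is precisely the defining condition of $\Delta_H$. Consequently $\Delta_H \cong \Delta_{I_H}$ as abstract simplicial complexes.

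Applying Theorem~\ref{th:HRW} to $J = I_H$ now delivers the homeomorphism of pairs
$$
\bigl(\sd(\partial \Delta^{n-1}),\,\Delta_H\bigr) \;\cong\; \bigl(S^{n-2},\,S^{n-2} \cap \A_H\bigr),
$$
which, with the notation $\A_H = \A_{I_H}$, is the first assertion of the theorem. For the second assertion I would specialize to $H = K_n$: the identification of $\Delta_{K_n}$ with the relevant barycentric subdivision $\sd(\partial\Delta^{n-1})$ (noted just before the theorem, with the braid arrangement as the canonical arrangement attached to $I_{K_n}$) allows the ambient complex in the first assertion to be replaced by $\Delta_{K_n}$, yielding $(\Delta_{K_n},\Delta_H) \cong (S^{n-2},S^{n-2} \cap \A_H)$. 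The main subtlety throughout is simply bookkeeping the chain/partition correspondence and matching dimension conventions; all the genuine topological work is already packaged in Theorem~\ref{th:HRW}, so I do not expect a hard obstacle here.
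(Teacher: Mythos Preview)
Your proposal is correct and follows essentially the same route as the paper: identify $\Delta_H$ with $\Delta_{I_H}$ via the chain/ordered-set-partition bijection, apply Theorem~\ref{th:HRW} with $J=I_H$, and then use the identification of $\Delta_{K_n}$ with $\sd(\partial\Delta^{n-1})$ (the braid arrangement being the canonical arrangement for $I_{K_n}$). The only minor point is that the first assertion of the theorem is a \emph{simplicial} isomorphism with the braid-arrangement triangulation restricted to $\A_H$, not merely a homeomorphism; the paper handles this by citing the arguments in \cite[Theorem~1]{HershSwartzCol}, and you should likewise note that the homeomorphism furnished by Theorem~\ref{th:HRW} respects this simplicial structure.
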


\subsection{The Cohen-Macaulay property}

It was shown by Jonsson \cite[Theorem 1.4]{Jonsson} that coloring complexes of graphs are constructible and in particular (homotopy) Cohen-Macaulay. 
Hultman \cite[Theorem 4.2]{Hultman} strengthened this result by providing a proof that those complexes are shellable. 
More precisely, he constructed a shelling for so-called \emph{link complexes} $\Delta_{\mathcal{A},\mathcal{H}}$, where $\mathcal{H}$ is a simplicial hyperplane 
arrangement and $\mathcal{A}$ a subspace arrangement consisting of hyperplanes. If $\mathcal{H}$ is the braid arrangement and $\mathcal{A}$ the 
subarrangement given by the edges of a graph $G$ (see Section \ref{subsect:arrangement}), the link complex $\Delta_{\mathcal{A},\mathcal{H}}$ coincides with the coloring complex of $G$. 
If $G$ is a connected graph, then shellability of $\Delta_G$ also follows from \cite[Remark 6]{HershSwartzCol}.

One could hope that maybe under some additional assumptions the same result holds in the more general situation of hypergraphs. However, as we 
will show, hypergraph coloring complexes tend to behave rather badly. More precisely, they are not even Cohen-Macaulay in general.

\begin{proposition}\label{prop:nonCM}
Let $H=([n],E)$ be a hypergraph having at least one edge 
of cardinality greater than $2$. Assume that $H$ has two disjoint edges. Then $\Delta_H$ is not Cohen-Macaulay over any field.
\end{proposition}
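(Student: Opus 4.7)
My plan is to apply Reisner's criterion (Theorem~\ref{th:Reisner}) after splitting into two cases according to whether or not $H$ is uniform. If $H$ has edges of differing cardinalities, then by Remark~\ref{rem:pure}(ii) the complex $\Delta_H$ is not pure---facets coming from edges of different sizes have different dimensions---so $\Delta_H$ cannot be Cohen-Macaulay over any field, and this case is complete.

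In the remaining case $H$ is $s$-uniform with $s \geq 3$, and I will produce a face whose link violates Reisner. Let $F_1, F_2$ denote the disjoint edges from the hypothesis, let $V' = [n] \setminus (F_1 \cup F_2)$, and fix any ordering $v_1, \ldots, v_t$ of $V'$. Consider the face
\begin{equation*}
A \; = \; F_1 \mid F_2 \mid \{v_1\} \mid \cdots \mid \{v_t\} \; \in \; \Delta_H.
\end{equation*}
Every face of $\Delta_H$ refining $A$ has the form $\pi_1 \mid \pi_2 \mid \{v_1\} \mid \cdots \mid \{v_t\}$, where $\pi_i$ is an ordered partition of $F_i$ (the singleton blocks cannot be split). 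I would argue that such a refinement lies in $\Delta_H$ precisely when $\pi_1$ or $\pi_2$ is the trivial one-block partition: indeed, since every edge has size $s$ and no edge is properly contained in another, an edge sitting inside a single block of the refinement must equal $F_1$ or $F_2$ and must fill that block entirely. Consequently
\begin{equation*}
\lk_{\Delta_H}(A) \; \cong \; \sd(\partial \Delta^{s-1}) \; \sqcup \; \sd(\partial \Delta^{s-1}) \; \cong \; S^{s-2} \sqcup S^{s-2}.
\end{equation*}

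This link has two connected components yet dimension $s - 2 \geq 1$, so its reduced $0$-th homology is nonzero over any field, which contradicts Reisner's criterion. The main obstacle lies in pinning down the link computation: one must check that no third edge of $H$ contributes an extra face by fitting inside some block of a refinement of $A$. This is precisely where the $s$-uniformity of $H$ (forcing any edge contained in a sub-block of $F_i$ to equal $F_i$) and the standing assumption that no edge properly contains another (ruling out proper sub-edges of $F_1$ or $F_2$) are both essential; any edge straddling the $F_1$- and $F_2$-parts is harmless because blocks of refinements of $A$ are confined to $F_1$, to $F_2$, or to singletons in $V'$.
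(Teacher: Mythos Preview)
Your proof is correct and follows essentially the same strategy as the paper's: split into the non-uniform case (handled by non-purity) and the $s$-uniform case with $s\geq 3$, then exhibit the face $F_1\mid F_2\mid\{v_1\}\mid\cdots\mid\{v_t\}$ and show its link is disconnected, invoking Reisner's criterion. The paper argues disconnectedness by classifying the facets containing this face into two types lying in $Q_{F_1}$ and $Q_{F_2}$ respectively, whereas you go slightly further and identify each piece of the link explicitly as $\sd(\partial\Delta^{s-1})\cong S^{s-2}$; both arguments hinge on the same observation that uniformity forces any edge contained in a block of a refinement to coincide with $F_1$ or $F_2$.
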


\begin{proof}
If $H$ is not a uniform hypergraph, then it follows from Remark \ref{rem:pure} 
that $\Delta_H$ is not pure and hence, not Cohen-Macaulay. 

So, assume that $H$ is uniform and 
let $s\geq 3$ be the cardinality of any edge of $H$. By assumption, there exist edges $F_1$, $F_2\in E$ such that 
$F_1\cap F_2=\emptyset$. Without loss of generality, we may further assume that $F_1=\{1,\ldots,s\}$ and $F_2=\{s+1,\ldots,2s\}$.

Consider the face $B=[s]|\{s+1,\ldots,2s\}|\{2s+1\}|\cdots|\{n\}$ of $\Delta_H$. 
In the following, we will compute the link of $B$ in $\Delta_H$ and show that it is disconnected. 
For this aim, we first determine the facets $A_1|\cdots|A_{n-s+1}$ of $\Delta_H$ which contain $B$ as a face. 
We distinguish the following two types of those facets:
\begin{itemize}
 \item[Type I:] $A_1=[s]$, $A_i=\{\sigma(s+i-1)\}$ for $2\leq i\leq s+1$ and a permutation $\sigma$ of $\{s+1,\ldots,2s\}$ and $A_i=\{i+s-1\}$ 
for $s+2\leq i\leq n-s+1$
\item[Type II:]  $A_i=\{\sigma(i)\}$ for $1\leq i\leq s$ and a permutation $\sigma$ of $[s]$, $A_2=\{s+1,\ldots,2s\}$ and $A_i=\{i+s-1\}$ 
for $s+2\leq i\leq n-s+1$.
\end{itemize}
As defined in (\ref{eq:edgesphere}), let $Q_{F_1}$ and $Q_{F_2}$ denote the set of faces of $\Delta_H$ having a block containing $F_1$ and 
$F_2$, respectively. Then all facets of type I and type II are contained in $Q_{F_1}$ and $Q_{F_2}$, respectively. 
In particular, each face of $\lk_{\Delta_H}(B)$ lies in $Q_{F_1}$ or $Q_{F_2}$, i.e.,
\begin{equation*}
 \lk_{\Delta_H}(B)=\lk_{Q_{F_1}}(B)\cup\lk_{Q_{F_2}}(B).
\end{equation*}
Moreover, if $S$ and $T$ are facets of $Q_{F_1}$ and $Q_{F_2}$, respectively, and if $B$ is a face of both, $S$ and $T$, then $S\cap T= B$. From this we infer that 
$\lk_{Q_{F_1}}(B)\cap \lk_{Q_{F_2}}(B)=\emptyset$. Since neither of those links is empty, we conclude 
that the link of $B$ in $\Delta_H$ is disconnected.
It follows from our assumptions that $\dim(\lk_{\Delta_H}(B))=s-2\geq 1$ and using Reisner's criterion (Theorem \ref{th:Reisner}) we conclude that 
$\Delta_H$ is not Cohen-Macaulay.
\end{proof}

We now consider an example which illustrates the idea of the above proof.

\begin{example}
Let $H=([6],E)$ where $E=\{\{1,2,3\},\{2,3,4\},\{2,4,5\},\{4,5,6\}\}$. In particular, the edges $\{1,2,3\}$ and 
$\{4,5,6\}$ are disjoint. Consider the vertex $B=\{1,2,3\}|\{4,5,6\}$ of $\Delta_H$. All facets of $\Delta_H$ containing $B$ 
either have $\{1,2,3\}$ as their first block followed by the singletons $\{4\}$, $\{5\}$ and $\{6\}$ in some order or they 
have $\{4,5,6\}$ as their last block preceded by the singletons $\{1\}$, $\{2\}$ and $\{3\}$ in some order. Note that 
in the proof of Proposition \ref{prop:nonCM} the former and the last ones are called type I and type II facets, respectively. 
The link of $B$ in $\Delta_H$ is given as 
\begin{equation*}
\lk_{\Delta_H}(\{1,2,3\}|\{4,5,6\})=\lk_{Q_{\{1,2,3\}}}(\{1,2,3\}|\{4,5,6\})\cup \lk_{Q_{\{4,5,6\}}}(\{1,2,3\}|\{4,5,6\}),
\end{equation*}
and it is easy to verify that it consists of two disjoint $6$-cycles. Thus, by Theorem \ref{th:Reisner}, $\Delta_H$ is not 
Cohen-Macaulay.
\end{example}

\section{Ehrhart theory, the chromatic polynomial and enumerative consequences}
\label{sec:ehrhart}

In this section we will examine coloring complexes from the point of view of Ehrhart theory and employ this approach to draw some enumerative conclusions regarding the $f$- and $h$-vectors of the coloring complex as well as the coefficients of the chromatic polynomial of a hypergraph.

\subsection{The coloring complex from the point of view of Ehrhart theory}\label{subsect:ColEhrhart}

The coloring complex of an ordinary graph can be studied from the perspective of inside-out polytopes \cite{breuer-diss,breuer-dall2,breuer-dall1,breuer-sanyal}. In this section we extend this approach to hypergraph coloring complexes.

The braid arrangement triangulates the unit cube $[0,1]^n\subset\RR^n$ into a simplicial complex $C^n$. 
Let $V([0,1]^n)$ denote the vertex set of $[0,1]^n$. Note that any vertex $A$ of $[0,1]^n$ can be interpreted as a subset of $[n]$, 
whence inclusion induces a partial order $\subset$ on $V([0,1]^n)$. The set $V([0,1]^n)$ ordered by $\subset$ forms precisely the 
Boolean lattice on $n$ atoms, with minimal element the all-zero vector $\mathbf{0}$ and maximal element the all-one vector $\mathbf{1}$. 
When $C^n$ is viewed as an abstract simplical complex on ground set $V([0,1]^n)$, the faces of $C^n$ are in bijection with the chains 
in $(V([0,1]^n),\subset)$: More precisely, $\{A_1,\ldots,A_l\} \subset V([0,1]^n)$ is an $(l-1)$-face of $C^n$ if and only if 
$A_1 \subsetneq \cdots \subsetneq A_l$ forms a chain in $(V([0,1]^n),\subset)$.

As in the arrangement interpretation of the coloring complex, every edge $F$ of a hypergraph $H=([n],E)$ corresponds to a linear subspace 
$H_F=\{x\in\RR^n ~:~ x_v=x_w ~\forall v,w\in F\}$. For all $F\subseteq [n]$ we let
\[
  P_F := H_F\cap [0,1]^n \text{ and } \Box_H := \bigcup_{F\in E} P_F.
\]
By abuse of notation we will denote by $P_F$ and $\Box_H$ both the subsets of $[0,1]^n$ defined above as well as the subcomplexes of $C^n$ they induce. (For example, the subcomplex of $C^n$ induced by $\Box_H$ consists of all faces of $C^n$ that are contained, as a subset of $\mathbb{R}^n$, in $\Box_H$.)

The theory of inside-out polytopes \cite{BeckZaslavsky06a,BeckZaslavsky06b} gives rise to the immediate observation that the Ehrhart function 
of $[0,1]^n\setminus \Box_H$ equals the chromatic polynomial $\chi_H$ of $H$ shifted by one: 
As a consequence of a theorem of Ehrhart, c.f.~\cite[Theorem 3.8]{BeckRobins07}, $L_X(k)$ is a polynomial in $k$ if $X$ 
is a unimodular simplicial complex. In the case of hypergraph colorings, we observe that the integer points 
$x\in \left(\ZZ^n \cap (k\cdot [0,1]^n \setminus \Box_H)\right)$ can be interpreted as colorings of the vertices of 
$H$ with $k+1$ colors such that for every edge $F$ there exist vertices $v,w\in F$ with $x_v\not=x_w$, i.e., the colorings are proper. We conclude that 
\[ 
  L_{[0,1]^n\setminus \Box_H}(k) = \chi_H(k+1).
\]

Now we relate this construction to the coloring complex. Note that every $P_F$ contains both $\mathbf{0}$ and $\mathbf{1}$. 
For a hypergraph $H=([n],E)$ and a set $F\subset[n]$ we define the complexes $Q_F$ and $\Delta_H$ as follows.
\[
  Q_F := P_F\setminus\{\mathbf{0},\mathbf{1}\} \text{ and } \Delta_H := \bigcup_{F\in E} Q_F
\]
Here $P_F\setminus\{\mathbf{0},\mathbf{1}\}$ denotes the complex consisting of all faces of $P_F$ that do not contain the vertex $\mathbf{0}$ and that do not contain the vertex $\mathbf{1}$. As it turns out, the complexes $Q_F$ are precisely the edge spheres defined in Section~\ref{sec:combinatorial}: A chain 
\[
  \emptyset \not= A_1 \subsetneq \cdots \subsetneq A_l \not= [n]
\]
of length $l-1$ is an $(l-1)$-face of $Q_F$ if and only if for every $1\leq i\leq l$ the set $F$ is either disjoint from $A_i$ or contained in $A_i$. 
This condition is equivalent to the property that there exists $1\leq i \leq l+1$ such that $F\subseteq A_{i}\setminus A_{i-1}$, where 
$A_0=\emptyset$ and $A_{l+1}=[n]$ as above. Consequently, the abstract simplicial complexes $Q_F$ are isomorphic to the edge spheres and 
the abstract simplicial complex $\Delta_H$ is isomorphic to the hypergraph coloring complex as defined previously. (The notation $Q_F$ and $\Delta_H$ 
is thus unambiguous.)

\subsection{$f$- and $h$-vectors of polynomials and complexes}

The $f$- and $h$-vectors of a polynomial $p(k)$ are coefficient vectors with respect to certain bases of the vector space of polynomials. 
Consider a positive integer $n$ and a polynomial $p(k)$ of degree at most $n$. The \emph{$f$-vector} $f(p)=(f_{-1},f_0,\ldots,f_n)$ of $p(k)$ is defined by 
\[
    p(k) =\sum_{i=0}^{n} f_i {k-1 \choose i}
\]
and $f_{-1}=1$. 
Here we use the fact that the polynomials ${k-1\choose i}$, $0\leq i\leq n$ form a basis of the vector space of polynomials of degree at most $n$. 
Similarly, we define the \emph{$h$-vector} $h(p)=(h_0,\ldots, h_{n+1})$ of $p(k)$ by
\[
    p(k) ={k+n \choose n} + \sum_{i=1}^{n+1} h_i {k+n-i \choose n}.
\]
and $h_0=1$. Here we use the fact that the polynomials ${k+n-i \choose n}$ for $1\leq i\leq n+1$ form a basis of the vector space of polynomials of 
degree at most $n$. The $f$- and $h$-vectors are related by
\begin{eqnarray}
\label{eqn:f-vs-h}
  h_i =  \sum_{k=-1}^{i-1} (-1)^{i-k-1} {n-k \choose i-k-1} f_{k}.
\end{eqnarray}
for $0\leq i\leq n+1$.

Note, that as long as $n\geq\deg(p(k))$, the value of $f_i$ is independent of the choice of $n$. If $n$ is chosen to be larger, 
zeros are appended to the end of the $f$-vector of $p$. This is not true for the $h$-vector. 
If the length of the $h$-vector is chosen differently, all entries of the $h$-vector will change, in general. 
If we wish to emphasize the parameter $n$ with respect to which the $h$-vector is defined, we denote the entries of the $h$-vector by $h^n_i$ for $0\leq i\leq n+1$. 

$f$- and $h$-vectors are classical parameters of simplicial complexes \cite{Stanley96,Ziegler95}. As stated in Section \ref{subsect:simplicial} the
$h$-vector of a simplicial complex $\Delta$ can be obtained as a transformation of the $f$-vector. It is a direct consequence of Equation 
(\ref{eq:ftoh}) in Section \ref{subsect:simplicial} that 
$h^{\Delta}$ can be computed via the formula given in (\ref{eqn:f-vs-h}).

The link between these two notions of $f$- and $h$-vectors is given by Ehrhart theory. If $\Delta$ is an $n$-dimensional geometric simplicial 
complex in which all simplices are unimodular and if $L_\Delta$ is its Ehrhart polynomial, 
then $f(L_\Delta)$ and $h^n(L_\Delta)$ are the $f$- and $h$-vectors, 
respectively, of the abstract simplicial complex $\Delta$. See \cite{breuer-dall2} for details.

From this point of view it is also straightforward to prove the relationship (\ref{eqn:chi-and-h}) between the chromatic polynomial of a hypergraph and the $h$-vector of the coloring complex as given in the introduction: If $n$ is the number of vertices of $H$ and the maximal cardinality of an edge of $H$ is $m$, then $\Delta_{H}$ is a simplical complex of dimension $n-m-1$ with $h$-vector $(h_0,\ldots,h_{n-m})$. $\Box_H$ is of dimension $n-m+1$ but has the same $h$-vector, as $\Box_H$ is the double cone over $\Delta_H$. We have already seen $L_{[0,1]^n\setminus\Box_H}(k)=\chi_H(k+1)$, which is equivalent to $(k+1)^n-\chi_H(k+1)=L_{\Box_H}(k)$. Passing to series, we compute
\begin{eqnarray*}
\frac{1}{z}\sum_{k\geq 1}\left((k)^n - \chi_H(k)\right) z^k
&=&
\sum_{k\geq 0}\left((k+1)^n - \chi_H(k+1)\right) z^k
\\ &=&
\sum_{k\geq 0}L_{\Box_H}(k) z^k
\\ &=&
\frac{h_0z^0+\ldots+h_{n-m}z^{n-m}}{(1-z)^{n-m+2}}
\end{eqnarray*}
where we use that $\sum_{k\geq 0}L_{\Box_H}(k) z^k$ is the Ehrhart series of the complex $\Box_H$, whence the coefficients of the numerator of $\frac{h_0z^0+\ldots+h_{n-m}z^{n-m}}{(1-z)^{n-m+2}}$ form the $h$-vector of $\Box_H$, see \cite[Chapter 3]{BeckRobins07}. Finally, we note that for any hypergraph $H$ we can start the series on the left-hand side at $k=0$ because $\chi_H(0)=0$. This is easiest to see via a slightly different construction: $\chi_H(k)=L_{(0,1)^n\setminus\Box_H}(k+1)$ and $\chi_H(0)=L_{(0,1)^n\setminus\Box_H}(1)=0$ as $(0,1)^n$ does not contain any lattice points.

\subsection{The combinatorics of the complexes $P_F$}

For the enumerative computations that follow, it is crucial to observe that the complexes $P_F$ are unit cubes triangulated by the braid arrangement. 
We also compute their $f$- and $h$-vectors. To simplify notation, we will use $f(P_F)$ and $h^{n}(P_F)$ to denote the vectors $f(L_{P_F})$ and 
$h^n(L_{P_F})$, respectively, and similarly for other complexes. 

\begin{proposition}
\label{prop:f-of-cube}
If $F\subseteq [n]$ and $\#F= k$, then $P_F$ is a unimodular simplicial complex isomorphic to 
the braid triangulation $C^{n-k+1}$ of an $(n-k+1)$-cube. Moreover, for all $0\leq i \leq n-k+1$
\begin{eqnarray}
\label{eqn:f-of-cube}
  f_i(P_F) & = & \sum_{j=0}^i (-1)^j {i \choose j} (i-j+2)^{n-k+1}.
\end{eqnarray}
Finally, if $n'\geq n-k+1$ and $0\leq i\leq n'+1$ then
\begin{eqnarray}
\label{eqn:h-of-cube}
  h^{n'}_i(P_F) & = & (-1)^{i} {n'+1 \choose i} + \sum_{a=0}^{i-1}\sum_{b=0}^a (-1)^{i-a+b-1} {n'-a \choose i-a-1} {a \choose b} (a-b+2)^{n-k+1}
\end{eqnarray}
\end{proposition}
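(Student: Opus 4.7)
The plan is to reduce all three claims to one geometric observation: that $P_F$ is a unimodular realization of the braid-triangulated cube $C^{n-k+1}$. Once this is established, the Ehrhart polynomial $L_{P_F}(t) = (t+1)^{n-k+1}$ follows, and the formulas for the $f$- and $h$-vectors fall out by mechanical coefficient extraction.

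For the isomorphism, I would define the linear map $\pi \colon H_F \to \R^{n-k+1}$ that collapses the $k$ coordinates indexed by $F$ (which coincide on $H_F$) into a single coordinate and leaves the remaining $n-k$ coordinates unchanged. The sublattice $H_F \cap \Z^n$ has the basis $\{e_i : i \notin F\} \cup \{\sum_{i \in F} e_i\}$, which $\pi$ sends bijectively to the standard basis of $\Z^{n-k+1}$; in particular $\pi$ restricts to a lattice-point preserving bijection $P_F \to [0,1]^{n-k+1}$. Moreover, the braid hyperplanes $\{x_i = x_j\}$ with $i,j \in F$ are trivial on $H_F$, while all others restrict to braid-type hyperplanes in the quotient, so $\pi$ carries the braid-induced subdivision of $P_F$ exactly onto the braid triangulation $C^{n-k+1}$. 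Since the maximal simplices of $C^{n-k+1}$ are order simplices whose edge vectors form a permutation of the standard basis, they are unimodular. This proves the first assertion and yields $L_{P_F}(t) = (t+1)^{n-k+1}$.

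For the $f$-vector, the substitution $m = t-1$ in the defining relation $L_{P_F}(t) = \sum_i f_i \binom{t-1}{i}$ turns it into $(m+2)^{n-k+1} = \sum_i f_i \binom{m}{i}$. Applying the $i$-th forward difference at $m=0$ gives
\[
f_i(P_F) \;=\; \Delta^i\bigl[(m+2)^{n-k+1}\bigr]_{m=0} \;=\; \sum_{j=0}^i (-1)^{i-j} \binom{i}{j} (j+2)^{n-k+1},
\]
and the re-indexing $j \mapsto i-j$ produces formula (\ref{eqn:f-of-cube}).

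For the $h$-vector, I would plug the just-derived expression for $f_a(P_F)$ into the conversion formula (\ref{eqn:f-vs-h}) with parameter $n'$, peeling off the $a=-1$ term (which equals $(-1)^i \binom{n'+1}{i}$ because $f_{-1}=1$) and combining signs via $(-1)^{i-a-1}(-1)^b = (-1)^{i-a+b-1}$ to recover (\ref{eqn:h-of-cube}). The only real obstacle is the bookkeeping in the first paragraph, namely verifying that $\pi$ is simultaneously lattice-preserving and triangulation-preserving; once that is done, the $f$- and $h$-vector derivations are purely formal.
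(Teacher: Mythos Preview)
Your proposal is correct, and the overall architecture---establish the isomorphism, then read off the $f$- and $h$-vectors---matches the paper's. The isomorphism step is the same ``contract $F$'' idea, though you phrase it geometrically via the linear projection $\pi$ while the paper works combinatorially on the vertex sets, defining $\phi(A)=A\setminus(F\setminus\{v\})$ for a fixed $v\in F$.

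The genuine difference is in how you extract the $f$-vector. The paper observes that $i$-faces of $C^d$ are chains of length $i$ in the Boolean lattice on $d$ atoms and then cites the OEIS entry A038719 for the chain count $T(d,i)=\sum_j(-1)^j\binom{i}{j}(i-j+2)^d$. You instead note that the Ehrhart polynomial of the cube is $(t+1)^{n-k+1}$ and apply the forward-difference operator to invert the expansion $p(t)=\sum_i f_i\binom{t-1}{i}$. Your route is more self-contained---it does not appeal to an external formula---and it stays entirely within the Ehrhart-theoretic framework the section is building, so in a sense it is the more natural argument here. The paper's route has the mild advantage of giving a direct combinatorial meaning to each $f_i$ as a chain count. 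The $h$-vector derivation is identical in both: plug the $f$-vector into the conversion~(\ref{eqn:f-vs-h}) and separate the $a=-1$ term.
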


\begin{proof}
The idea for the construction of the isomorphism is simply to contract the edge $F$. Without loss of generality, we can assume that $F=\{n-k+1,\ldots,n\}$. Let $V(P_F)$ and $V(C^{n-k+1})$ denote the vertex sets of 
$P_F$ and $C^{n-k+1}$, respectively. We define a map $\phi$ from $V(P_F)$ to $V(C^{n-k+1})$ as follows. 
For any vertex $A\subseteq [n]$ of $P_F$ we let $\phi(A):=A\setminus(F\setminus\{n-k+1\})\subseteq [n-k+1]$. 
Note that $n-k+1\in\phi(A)$ if and only if $F\subset A$ and $n-k+1\not\in\phi(A)$ if and only if $F\cap A=\emptyset$. 
It is straightforward to verify that $\phi$ gives an isomorphism between $P_F$ and $C^{n-k+1}$.

To see (\ref{eqn:f-of-cube}), observe, on the one hand, that the number $T(d,k)$ of chains of length $k$ in a Boolean lattice on $d$ atoms is
\[
  T(d,k) = \sum_{j=0}^k (-1)^j {k \choose j}(k-j+2)^d,
\]
by \cite[A038719]{oeis}. On the other hand, it follows from Section \ref{subsect:ColEhrhart} that $f_i(P_F)=T(n-k+1,i)$, which yields the desired identity.

To compute the $h$-vector of $P_F$, we apply the transformation (\ref{eqn:f-vs-h}) to (\ref{eqn:f-of-cube}). 
Let $d$ be the dimension of the cube $C^d$ and let $n'\geq d$ be the parameter of the $h$-vector. Then, using that $f_{-1}=1$, we obtain 
\begin{eqnarray*}
  h^{n'}_i(C^d) & = & (-1)^{i} {n'+1 \choose i} + \sum_{a=0}^{i-1} (-1)^{i-a-1} {n'-a \choose i-a-1} f_{a}(C^d) \\
&=& (-1)^{i} {n'+1 \choose i} + \sum_{a=0}^{i-1}\sum_{b=0}^a (-1)^{i-a+b-1} {n'-a \choose i-a-1} {a \choose b} (a-b+2)^{d}.
\end{eqnarray*}
Applying the fact that $P_F$ is isomorphic to $C^{n-k+1}$ completes the proof.
\end{proof}

Using an analogous proof, one can also show the following statement.

\begin{remark}
\label{rem:intersection-of-P_F}
Let $S\subseteq E$ be a subset of the edge set of the hypergraph $H=([n],E)$. Let $b$ be the number of connected components of the restricted hypergraph $([n],S)$. Then $\bigcap_{F\in S} P_F$ is isomorphic to $C^b$. Moreover, for all $0\leq i \leq b$
\begin{eqnarray}
  f_i(\bigcap_{F\in S} P_F) & = & \sum_{j=0}^i (-1)^j {i \choose j} (i-j+2)^{b}.
\end{eqnarray}
\end{remark}

The idea of the proof is, again, to contract all the components of $([n],S)$.

\subsection{The $f$-vector of the chromatic polynomial}

In \cite{HershSwartzCol} Hersh and Swartz give bounds on the coefficients of the chromatic polynomial of a graph by giving bounds on the $h$-vector of a suitable transformation of the chromatic polynomial. The crucial ingredient of the proof is that coloring complexes of graphs have \emph{convex ear decompositions}. This is not true for hypergraphs, as we see for example from the fact that coloring complexes of hypergraphs can have negative entries in their $h$-vector, see Example~\ref{ex:not-pertitionable}. Nonetheless, it is possible to obtain bounds on the coefficients of the chromatic polynomial of a hypergraph. These are most conveniently expressed in terms of the $f$-vector of the chromatic polynomial.

The fact that $f_i(\chi_H(k+1))$ counts the number of $i$-dimensional faces in $C^n$ that are not contained in $\Box_H$ yields a number of useful results. In particular, it allows the elementary observation that $\chi_H(k)\leq \chi_{H'}(k)$ for a hypergraph $H$ and a subgraph $H'$ to be strengthened in two ways.

\begin{theorem}
\label{thm:subgraph}
Let $H=([n],E)$ be a hypergraph and $H'$ a subgraph. Let $H^*=([n^*], E^*)$ be any hypergraph with the property that for every edge $F\in E$ 
there exists an edge $F^*\in E^*$ such that $F^* \subseteq F$. Then for all $0\leq i \leq n$
\[
 0 \leq f_i(\chi_{H^*}(k)) \leq f_i(\chi_H(k)) \leq f_i(\chi_{H'}(k)).
\]
\end{theorem}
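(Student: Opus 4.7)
The plan is to exploit the Ehrhart-theoretic interpretation of the $f$-vector of the chromatic polynomial developed in Section~\ref{subsect:ColEhrhart}. Specifically, the identity $\chi_H(k+1)=L_{[0,1]^n\setminus\Box_H}(k)$, combined with the fact that $\Box_H$ is a subcomplex of the unimodular braid triangulation $C^n$ of $[0,1]^n$, lets me decompose the complement $[0,1]^n\setminus\Box_H$ into its open cells. Since each open $i$-face $\sigma^\circ$ of $C^n$ satisfies $L_{\sigma^\circ}(k)=\binom{k-1}{i}$ and is either entirely inside or entirely outside $\Box_H$, this yields
\[
 \chi_H(k+1)=\sum_{i\geq 0}\phi_i(H)\binom{k-1}{i},\qquad \phi_i(H):=\#\{\sigma\in C^n:\dim\sigma=i,\ \sigma\not\subseteq\Box_H\},
\]
so the entries of the $f$-vector of $\chi_H$ (after the appropriate unit shift built into the basis $\binom{k-1}{i}$) record face counts in the combinatorial complement $C^n\setminus\Box_H$. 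Nonnegativity $0\leq f_i(\chi_{H^*}(k))$ is then immediate since $\phi_i(H^*)$ counts faces.

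The two remaining inequalities follow from straightforward containments between the $\Box_\bullet$ complexes. For the subhypergraph $H'$ one has $E(H')\subseteq E$ and therefore $\Box_{H'}=\bigcup_{F\in E(H')}P_F\subseteq\bigcup_{F\in E}P_F=\Box_H$, so every $i$-face of $C^n$ avoiding $\Box_H$ also avoids $\Box_{H'}$; counting gives $\phi_i(H)\leq\phi_i(H')$. For the comparison with $H^*$ the key observation is that $F\mapsto P_F$ is \emph{inclusion-reversing}: if $F^*\subseteq F$ then requiring the coordinates indexed by $F$ to agree is a stronger condition than the analogous requirement on $F^*$, so $H_F\subseteq H_{F^*}$ and hence $P_F\subseteq P_{F^*}$. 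The hypothesis that every $F\in E$ contains some $F^*\in E^*$ thus yields $P_F\subseteq P_{F^*}\subseteq \Box_{H^*}$ for all $F\in E$, whence $\Box_H\subseteq \Box_{H^*}$ and therefore $\phi_i(H^*)\leq\phi_i(H)$.

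The main technical wrinkle is the case $n^*\neq n$, where the three chromatic polynomials naturally live in cubes of different dimensions. I would handle this by padding $H^*$ with $n-n^*$ isolated vertices so that it becomes a hypergraph on $[n]$ (the hypothesis $F^*\subseteq F\subseteq[n]$ already forces every non-isolated vertex of $H^*$ to lie in $[n]$). Adding an isolated vertex multiplies the chromatic polynomial by $k$ and corresponds geometrically to taking the Cartesian product of the Ehrhart complement with $[0,1]$, which leaves the face-count interpretation intact once the index parameter of the $f$-vector is shifted accordingly. With all three complexes embedded in the same cube $C^n$, the nested inclusions $\Box_{H'}\subseteq\Box_H\subseteq\Box_{H^*}$ established above immediately give the claimed chain of inequalities on the $f$-vectors.
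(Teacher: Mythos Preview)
Your approach is essentially the paper's: establish the containments $\Box_{H'}\subseteq\Box_H\subseteq\Box_{H^*}$ via the inclusion-reversing map $F\mapsto P_F$ and read off the $f$-vector inequalities from face counts in the braid-triangulated cube. That part is fine.

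There is, however, a genuine gap in the shift bookkeeping. Your identity
\[
\chi_H(k+1)=\sum_{i\geq 0}\phi_i(H)\binom{k-1}{i}
\]
says precisely that $\phi_i(H)=f_i\bigl(\chi_H(k+1)\bigr)$, the $f$-vector of the \emph{shifted} polynomial $k\mapsto\chi_H(k+1)$, not of $\chi_H$ itself. The phrase ``after the appropriate unit shift built into the basis'' does not repair this: for any polynomial $p$ one has $f_i\bigl(p(k+1)\bigr)=f_i(p)+f_{i+1}(p)$, so componentwise inequalities for the shifted $f$-vectors do \emph{not} in general imply the corresponding inequalities for the unshifted ones. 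Thus the chain $\phi_i(H^*)\leq\phi_i(H)\leq\phi_i(H')$ does not yet yield the theorem as stated.

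The paper closes this gap by passing to the half-open cube. Let $\Box'$ be the subcomplex of $\partial C^n$ supported on $[0,1]^n\setminus[0,1)^n$; then $\chi_H(k)=L_{[0,1]^n\setminus(\Box_H\cup\Box')}(k)$, so that $f_i(\chi_H(k))$ itself counts the $i$-faces of $C^n$ lying outside $\Box_H\cup\Box'$. Since $\Box'$ is independent of the hypergraph, your containments $\Box_{H'}\subseteq\Box_H\subseteq\Box_{H^*}$ now give the desired inequalities directly. This is a one-line fix, but without it the argument does not reach the stated conclusion.
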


Note that, for any polynomials $p(k)$ and $q(k)$ of degree at most $n$ we always have that if $f_i(p(k)) \leq f_i(q(k))$ for all $i$, then $p(k)\leq q(k)$ for all $k>0$ as the binomial coefficients ${k-1 \choose i}$ take non-negative values for positive $k$. 

\begin{proof}
Let $\Box'$ denote the subcomplex of $\partial C^n$ induced by the set $[0,1]^n\setminus [0,1)^n$. Observe that $\chi_H(k)=L_{[0,1]^n\setminus (\Box_H \cup \Box')}(k)$, where $\Box'$ does not depend on $H$. Therefore, 
we can prove an inequality of the form $f_i(\chi_{H_1}(k))\leq f_i(\chi_{H_2}(k))$ by proving that $\Box_{H_1} \supseteq \Box_{H_2}$.

The fact that $f_i(\chi_H(k))$ equals the number of $i$-dimensional faces in $C^n$ that are not contained in $\Box_H\cup\Box'$ shows the first inequality.

If $\sigma$ is a face of $\Box_{H}$, then $\sigma$ is contained in a linear subspace $H_F$ for some edge $F$ of $H$. 
Then, there exists an edge $F^*$ of $H^*$ such that $F^* \subseteq F$ and $H_F \subseteq H_{F^*}$. Thus $\sigma$ is also a face of $\Box_{H^*}$. This shows the second inequality.

If $\sigma$ is a face of $\Box_{H'}$, then $\sigma$ is contained in a linear subspace $H_F$ for some edge $F$ of $H'$. As $F$ is also an edge of $H$, it follows that $\sigma$ is a face of $\Box_H$. This shows the last inequality.
\end{proof}

Let $H=([n],E)$ be a hypergraph and let $S\subseteq E$ be a subset of the edge set. 
For all $1\leq a \leq \#E$ and $b\in \NN$ we denote by $s(a,b)$ the number of sets $S\in {E \choose a}$ such that $([n],S)$ has $b$ components. 
For convenience, we define $s(0,n)=1$ and $s(0,b)=0$ for all $b\not=n$, independent of the edge set $E$. 
Note that since we only consider hypergraphs without loops, it holds that $s(a,n)=0$ for all $a\geq 1$.

\begin{theorem}
\label{thm:f-vector}
Let $n$ be a positive integer and $H=([n],E)$ be a hypergraph without loops. Then for all $0\leq i\leq n$ and every $0\leq m \leq \#E$
\begin{eqnarray}
f_i(\chi_H(k+1)) & = & 
\sum_{a=0}^{\#E} (-1)^a\sum_{b=0}^n s(a,b)\cdot \left( \sum_{c=0}^i(-1)^c {i \choose c}(i-c+2)^{b} \right), \label{eqn:f-v-eq}\\
f_i(\chi_H(k+1)) & \leq & 
\sum_{a=0}^{m} (-1)^a\sum_{b=0}^n s(a,b)\cdot \left( \sum_{c=0}^i(-1)^c {i \choose c}(i-c+2)^{b} \right), \;\;\;\;\text{if $m$ is even}, \label{eqn:f-v-even}\\
f_i(\chi_H(k+1)) & \geq & 
\sum_{a=0}^{m} (-1)^a\sum_{b=0}^n s(a,b)\cdot \left( \sum_{c=0}^i(-1)^c {i \choose c}(i-c+2)^{b} \right), \;\;\;\;\text{if $m$ is odd}. \label{eqn:f-v-odd}
\end{eqnarray}
Moreover, if $l = \min\{\#F~:~F\in E\}$ and $n - l + 2 \leq i \leq n$, then
\[
  f_i(\chi_H(k+1)) = \sum_{j=0}^i (-1)^j {i \choose j} (i-j+2)^n.
\]
\end{theorem}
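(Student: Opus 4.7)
The plan is to prove all three parts simultaneously by applying inclusion–exclusion to the edge set $E$, using the combinatorics of the complexes $P_F$ that was developed in Proposition~\ref{prop:f-of-cube} and Remark~\ref{rem:intersection-of-P_F}.

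First I would use the identity $\chi_H(k+1)=L_{[0,1]^n\setminus \Box_H}(k)$ together with the fact that the $f$-vector of an Ehrhart polynomial of a unimodular simplicial complex records its face numbers. Thus $f_i(\chi_H(k+1))$ counts the $i$-faces of $C^n$ that are not contained in $\Box_H=\bigcup_{F\in E}P_F$. Standard inclusion–exclusion on this union gives
\[
 f_i(\chi_H(k+1)) \;=\; \sum_{S\subseteq E}(-1)^{\#S}\, f_i\Bigl(\bigcap_{F\in S}P_F\Bigr),
\]
where the $S=\emptyset$ term is $f_i(C^n)$. By Remark~\ref{rem:intersection-of-P_F}, the intersection $\bigcap_{F\in S}P_F$ is isomorphic to $C^{b}$ when $([n],S)$ has $b$ connected components, so its $f$-vector is $\sum_{c=0}^i(-1)^c\binom{i}{c}(i-c+2)^{b}$. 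Regrouping by $a=\#S$ and by $b$, using the definition of $s(a,b)$ (with $s(0,n)=1$ accounting for $S=\emptyset$), yields identity~(\ref{eqn:f-v-eq}).

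Next, since each intersection $f_i(\bigcap_{F\in S}P_F)$ is a non-negative face count, the Bonferroni inequalities apply to the inclusion–exclusion expansion of $|\Box_H|_i := $ (number of $i$-faces of $\bigcup_{F\in E}P_F$). Truncating $|\Box_H|_i=\sum_{a\geq 1}(-1)^{a+1}S_a$ at odd $a$ gives an upper bound on $|\Box_H|_i$, hence a lower bound on $f_i(\chi_H(k+1))=f_i(C^n)-|\Box_H|_i$; this is inequality~(\ref{eqn:f-v-odd}) after rewriting the truncated sum as $\sum_{a=0}^{m}(-1)^a S_a$. Even truncation reverses the inequality and gives~(\ref{eqn:f-v-even}).

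For the closed form in the range $n-l+2\leq i\leq n$, I would observe that the inner bracketed sum $\sum_{c=0}^i(-1)^c\binom{i}{c}(i-c+2)^b$ is precisely $f_i(C^b)$ by Proposition~\ref{prop:f-of-cube}, and therefore vanishes whenever $i>b$, as $\dim C^b=b$. For any non-empty $S\subseteq E$, the subhypergraph $([n],S)$ contains an edge of size at least $l$, whose vertices coalesce into one component; hence at most $n-l+1$ components can occur, so $s(a,b)=0$ for all $a\geq 1$ and $b\geq n-l+2$. Consequently, once $i\geq n-l+2$, the inner factor kills every term with $a\geq 1$, and the only surviving contribution is the $a=0$, $b=n$ term, giving $f_i(\chi_H(k+1))=f_i(C^n)=\sum_{j=0}^i(-1)^j\binom{i}{j}(i-j+2)^n$.

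The main obstacle is a conceptual one rather than a computational one: the Bonferroni direction has to be tracked through the sign flip coming from passing between $|\Box_H|_i$ and $f_i(C^n)-|\Box_H|_i$, and one must pin down precisely why $b\leq n-l+1$ whenever $S$ is non-empty. Once these two points are set, all the enumerative identities reduce to direct substitution into Proposition~\ref{prop:f-of-cube} and Remark~\ref{rem:intersection-of-P_F}.
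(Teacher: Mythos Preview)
Your proposal is correct and follows essentially the same approach as the paper: inclusion--exclusion over subsets $S\subseteq E$, the identification of $\bigcap_{F\in S}P_F$ with a triangulated cube $C^b$ via Remark~\ref{rem:intersection-of-P_F}, regrouping by $(a,b)$ using $s(a,b)$, and Bonferroni truncation for the inequalities. For the last identity the paper argues directly that $\Box_H$ has no faces of dimension $\geq n-l+2$ (since each $P_F$ has dimension at most $n-l+1$), while you deduce the same fact from the formula by showing the inner sum vanishes for $a\geq 1$; these are equivalent.
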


\begin{proof}
By simple inclusion-exclusion, we obtain for all $0\leq i\leq n$ and every $0\leq m \leq \#E$
\begin{eqnarray*}
  f_i(\chi_H(k+1)) & = & f_i(C^n) + \sum_{a=1}^{\#E}(-1)^a \sum_{S\in {E \choose a}} f_i(\bigcap_{F\in S} P_F), \\
  f_i(\chi_H(k+1)) & \leq & f_i(C^n) + \sum_{a=1}^{m}(-1)^a \sum_{S\in {E \choose a}} f_i(\bigcap_{F\in S} P_F), \;\;\;\;\text{ if $m$ is even},\\
  f_i(\chi_H(k+1)) & \geq & f_i(C^n) + \sum_{a=1}^{m}(-1)^a \sum_{S\in {E \choose a}} f_i(\bigcap_{F\in S} P_F), \;\;\;\;\text{ if $m$ is odd}.
\end{eqnarray*}
By Remark~\ref{rem:intersection-of-P_F} we note that $\bigcap_{F\in S} P_F$ is a triangulation of some unit cube by the braid arrangement. In particular $f_i(\bigcap_{F\in S} P_F)$ depends only on the dimension of $\bigcap_{F\in S} P_F$, which allows us to gather terms. Thus, using the definition of $s(a,b)$ and 
Remark~\ref{rem:intersection-of-P_F} the first three formulas follow. The last identity follows from the fact that none of the complexes 
$P_F$ have faces of dimension $n-l+2$ or higher and thus $f_i(\chi_H(k+1)) = f_i(C^n)$ for $i\geq n-l+2$.
\end{proof}

As an application of the preceding theorem, we derive explicit upper and lower bounds for the $f$-vector.

\begin{corollary}
\label{cor:f-vector}
Let $n$ be a positive integer and $H=([n],E)$ by an $r$-uniform hypergraph with $r\geq 2$. Then for all $0\leq i \leq n$
\begin{eqnarray*}
\sum_{c=0}^i(-1)^c {i \choose c} \left( (i-c+2)^{n} - \#E\cdot (i-c+2)^{n-r+1} \right) 
& \leq \;
f_i(\chi_H(k+1)) \;
\leq &
\sum_{c=0}^i(-1)^c {i \choose c}(i-c+2)^{n}.
\end{eqnarray*}
\end{corollary}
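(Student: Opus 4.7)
The plan is to derive both bounds as direct specializations of Theorem~\ref{thm:f-vector}, taking $m=0$ for the upper bound and $m=1$ for the lower bound, and then explicitly evaluating the numbers $s(a,b)$ in these two cases using the $r$-uniformity assumption.

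First I would observe that by the definition preceding Theorem~\ref{thm:f-vector} we have $s(0,n)=1$ and $s(0,b)=0$ for $b\neq n$. Applying (\ref{eqn:f-v-even}) with $m=0$, only the $a=0$, $b=n$ term survives, yielding
\[
f_i(\chi_H(k+1)) \;\leq\; \sum_{c=0}^i (-1)^c \binom{i}{c}(i-c+2)^n,
\]
which is the desired upper bound.

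For the lower bound, I apply (\ref{eqn:f-v-odd}) with $m=1$. This requires computing $s(1,b)$, which counts the edges $F\in E$ for which the one-edge hypergraph $([n],\{F\})$ has $b$ connected components. Since $H$ is $r$-uniform, every such $F$ has cardinality $r$, and $([n],\{F\})$ has exactly one non-trivial component (of size $r$) together with $n-r$ isolated vertices, giving $b=n-r+1$. Hence $s(1,n-r+1)=\#E$ and $s(1,b)=0$ for all other $b$. Plugging this, together with the computation of the $a=0$ term above, into (\ref{eqn:f-v-odd}) gives
\[
f_i(\chi_H(k+1)) \;\geq\; \sum_{c=0}^i (-1)^c \binom{i}{c}(i-c+2)^n \;-\; \#E \cdot \sum_{c=0}^i (-1)^c \binom{i}{c}(i-c+2)^{n-r+1},
\]
which after combining the two sums is precisely the lower bound in the statement.

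There is no real obstacle here: the entire argument is a careful bookkeeping of the $a=0$ and $a=1$ terms in the inclusion-exclusion of Theorem~\ref{thm:f-vector}, with the $r$-uniformity hypothesis used only to pin down $s(1,b)$ to a single value of $b$. The only mild subtlety worth flagging is that $r\geq 2$ is needed so that $H$ has no loops (as required in Theorem~\ref{thm:f-vector}) and so that $n-r+1\leq n-1$, ensuring the lower bound is nontrivial.
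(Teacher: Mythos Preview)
Your proof is correct and essentially identical to the paper's own argument: both derive the upper bound from (\ref{eqn:f-v-even}) with $m=0$ and the lower bound from (\ref{eqn:f-v-odd}) with $m=1$, using $s(0,n)=1$, $s(0,b)=0$ otherwise, and, via $r$-uniformity, $s(1,n-r+1)=\#E$, $s(1,b)=0$ otherwise.
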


\begin{proof}
The first inequality follows from (\ref{eqn:f-v-odd}) in Theorem~\ref{thm:f-vector} for $m=1$ using $s(0,n)=1$ 
and $s(0,b)=0$ for all other $b$ and the fact that for $r$-uniform hypergraphs $s(1,n-r+1)=\#E$ and $s(1,b)=0$ for $b\not=n-r+1$. 
The second inequality follows from (\ref{eqn:f-v-even}) in Theorem~\ref{thm:f-vector} for $m=0$ using $s(0,n)=1$ and $s(0,b)=0$ for all other $b$.
\end{proof}

Note that the upper bound in the above corollary holds for arbitrary hypergraphs, not just uniform ones.

\subsection{The $h$-vector of the coloring complex}

In this subsection we show that in general the $h$-vector of the coloring complex $\Delta$ may have negative entries. Since the $h$-vector of a 
partitionable simplicial complex is always non-negative (entry-wise) (see e.g., \cite[Proposition 2.3]{Stanley96}), this 
demonstrates that coloring complexes are not partitionable in general. We proceed by constructing an example.

\begin{example}\label{ex:not-pertitionable}
Consider the hypergraph $H$ on vertex set $[6]$ with edges $123$, $345$ and $156$. $\Box_H$ is a 4-dimensional complex, whence $h^4(L_{\Box_H}(k))=h(\Box_H)$. By inclusion-exclusion the Ehrhart function $L_{\Box_H}$ is given by
\begin{eqnarray*}
L_{\Box_H}(k) &=&
L_{P_{123}}(k) + L_{P_{345}}(k) + L_{P_{156}}(k) - L_{P_{12345}}(k) - L_{P_{13456}}(k) - L_{P_{12356}}(k) + L_{P_{123456}}(k).
\end{eqnarray*}
Applying Proposition~\ref{prop:f-of-cube}, we compute
\begin{eqnarray*}
h^4(L_{P_{123}}(k)) = h^4(L_{P_{345}}(k)) = h^4(L_{P_{156}}(k)) & = & (1,11,11,1,0,0) \\
h^4(L_{P_{12345}}(k)) = h^4(L_{P_{13456}}(k)) = h^4(L_{P_{12356}}(k)) & = & (1,-1,-1,1,0,0) \\
h^4(L_{P_{123456}}(k)) & = & (1,-3,3,-1,0,0)
\end{eqnarray*}
and so 
\[
h(\Box_H) = h^4(L_{\Box_H}(k)) = (1,33,39,-1,0,0).
\]
Now, $\Box_H$ is the double cone over the coloring complex $\Delta_H$. Removing the two cone points does not affect the $h$-vector, 
except that the last two entries are removed \cite[Exercise 7(a), p. 136]{Stanley96}. Thus $h(\Delta_H)=(1,33,39,-1)$, 
which shows in particular that the coloring complex of $H$ is not partitionable. 
Computational evidence suggests that the above construction may produce $r$-uniform hypergraphs with non-partitionable coloring complexes for all odd $r\geq 3$.
\end{example}

We summarize the results of this subsection in the following proposition.

\begin{proposition}
There exist uniform hypergraphs $H$ such that $h(\Delta_H)$ has negative entries and $\Delta_H$ is not partitionable.
\end{proposition}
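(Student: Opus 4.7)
The plan is to exhibit an explicit uniform hypergraph whose coloring complex has a negative $h$-vector entry, and then invoke the standard fact that partitionable complexes have non-negative $h$-vectors (\cite[Proposition 2.3]{Stanley96}) to conclude non-partitionability.

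Concretely, I would take $H$ to be the $3$-uniform hypergraph on $[6]$ with edges $\{1,2,3\}$, $\{3,4,5\}$, $\{1,5,6\}$, as in Example~\ref{ex:not-pertitionable}. To compute $h(\Delta_H)$, I would first pass to $\Box_H = \bigcup_{F \in E} P_F$, which is the double cone over $\Delta_H$ with apices $\mathbf{0}$ and $\mathbf{1}$. Removing these two cone points preserves all entries of the $h$-vector except the last two, which are dropped (see \cite[Exercise 7(a), p. 136]{Stanley96}); so computing $h(\Box_H)$ suffices.

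To compute $h(\Box_H)$, I would apply inclusion-exclusion at the level of Ehrhart functions, writing $L_{\Box_H}$ as the alternating sum of $L_{\bigcap_{F\in S} P_F}$ over non-empty subsets $S \subseteq E$. Here the key observation is that for each such $S$ the intersection $\bigcap_{F\in S} P_F$ equals $P_{\bigcup_{F \in S} F}$ (since the edges in the chosen example connect into a single component as soon as one takes at least one edge, and pairs of edges already connect their union). Then Proposition~\ref{prop:f-of-cube} gives the $h$-vector of each $P_F$ in terms of the braid triangulation of a cube of dimension $n - |F| + 1$, where $n = 6$. Since the $h$-vector of a polynomial is linear in the polynomial (once we fix the ambient dimension $n' = 4$, matching $\dim \Box_H$), summing these contributions with signs yields $h(\Box_H)$ directly.

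The arithmetic then reproduces the values displayed in Example~\ref{ex:not-pertitionable}: three singletons $\{F\}$ contribute $(1,11,11,1,0,0)$ each, three pairwise unions contribute $(1,-1,-1,1,0,0)$ each, and the full union contributes $(1,-3,3,-1,0,0)$, giving $h(\Box_H) = (1,33,39,-1,0,0)$ and hence $h(\Delta_H) = (1,33,39,-1)$. The negativity of the entry $-1$ simultaneously witnesses that $h(\Delta_H)$ has a negative entry and, via \cite[Proposition 2.3]{Stanley96}, that $\Delta_H$ is not partitionable. The main obstacle is not conceptual but bookkeeping: one must verify the inclusion-exclusion structure (that every intersection of $P_F$'s is again a single $P_{F'}$ of the appropriate dimension) and correctly apply formula (\ref{eqn:h-of-cube}) with the common parameter $n' = 4$ so that the component $h$-vectors add meaningfully.
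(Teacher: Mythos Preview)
Your proposal is correct and follows essentially the same approach as the paper: you use the identical hypergraph from Example~\ref{ex:not-pertitionable}, compute $h(\Box_H)$ via inclusion-exclusion on Ehrhart functions using Proposition~\ref{prop:f-of-cube} with parameter $n'=4$, deduce $h(\Delta_H)=(1,33,39,-1)$ by removing the two cone points, and conclude non-partitionability from \cite[Proposition 2.3]{Stanley96}. The only minor addition you make explicit is the justification that each $\bigcap_{F\in S}P_F$ equals $P_{\bigcup_{F\in S}F}$ (because any two of the three edges share a vertex), which the paper leaves implicit in its labeling $P_{12345}$, $P_{13456}$, $P_{12356}$, $P_{123456}$.
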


Note that Example \ref{ex:not-pertitionable} provides yet another proof of Proposition \ref{prop:nonCM} since the entries of the $h$-vector 
of a Cohen-Macaulay complex are all non-negative, see e.g., \cite[Theorem 5.1.10]{BH-book}. Also, this implies that coloring complexes of hypergraphs do not in general have a convex ear decomposition, see e.g., \cite{HershSwartzCol}.

\section{The homotopy type of the coloring complex}
\label{sec:homtype}

In the following, we will use the notations introduced in the second part of Section \ref{subsect:simplicial}.  
The aim of this section is to investigate the homotopy type of the coloring complex of an arbitrary hypergraph. Whereas, classical 
coloring complexes of graphs are known to be homotopy equivalent to wedges of spheres of top dimension, 
it turns out that for hypergraph coloring complexes not that much can be said.
However, using the following special version of the Wedge Lemma from \cite{ZZ} we can at least provide a method of 
how to compute the homotopy type of the coloring complex of a graph.

\begin{wedgelemma}\cite[Wedge Lemma 6.1]{HRW-Koszul}
 Let $\U$ be a covering of a regular CW-complex $\Delta$ by closed subcomplexes $\Delta_1,\ldots,\Delta_l$. Let $P^{\U}$ be the 
intersection poset of $\U$. Assume that for all $p\in P^{\U}$ there is a point $c_p\in U_p$ such that for all $q>p$ the inclusion map 
$U_q\hookrightarrow U_p$ for $q>p$ is homotopic to a constant map which sends $U_q$ to $c_p$. Then 
$\Delta$ is homotopy equivalent to the wedge
\begin{equation*}
 \bigvee_{p\in P}\Delta(P_{<p})\ast U_p,
\end{equation*}
in which the wedge identifies the vertex $p$ in $\Delta(P_{<p})$ with the vertex $p$ in $\Delta(P_{<\hat{1}})$, 
where $\hat{1}$ is the top element of $P$ corresponding to the intersection $\bigcap_{i=1}^l \Delta_i$.
\end{wedgelemma}

We now explain how the above ``Wedge Lemma'' can be implied in our situation.

Given a hypergraph $H=([n],E)$ we have seen in Section \ref{subsect:combCol} that each edge $F\in E$ 
gives rise to a subcomplex $Q_F$ of $\Delta_H$, which was referred to as edge sphere previously. 
Moreover, by construction, it holds that $\Delta_H=\bigcup_{F\in E} Q_F$, which means that the family 
$\U^H=(Q_F)_{F\in E}$ is a covering of $\Delta_H$. 
To simplify notation, let $P^H$ denote the intersection poset $P^{\U^H}$ 
of this covering. In order to better understand the structure of $P^H$ we need to determine 
how the intersections $\bigcap_{F\in S}Q_F$ for $S\subseteq E$ look like. 
This is accomplished by the following lemma.

\begin{lemma}\label{lem:intersection}
Let $H=([n],E)$ be a hypergraph and let $S\subseteq E$.  
Let $H_S^{(1)},\ldots ,H_S^{(m)}$ denote the connected components of $H_S$. 
Then $\bigcap_{F\in S}Q_F$ is homeomorphic to a $d_S$-sphere, where $d_S=n-\sum_{i=1}^m n_S^{(i)}+m-2$.
Here, for $1\leq i\leq m$ we denote by $n_S^{(i)}$ the number of vertices in $H_S^{(i)}$.
\end{lemma}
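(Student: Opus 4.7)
The plan is to use the Ehrhart-theoretic picture of Section~\ref{subsect:ColEhrhart}, in which $Q_F=P_F\setminus\{\mathbf{0},\mathbf{1}\}$. Since removing all faces that contain $\mathbf{0}$ or $\mathbf{1}$ commutes with taking intersections of subcomplexes, one has
\[
\bigcap_{F\in S} Q_F \;=\; \Bigl(\bigcap_{F\in S} P_F\Bigr)\setminus\{\mathbf{0},\mathbf{1}\}.
\]
Thus the task splits into first identifying $\bigcap_{F\in S}P_F$ as a simplicial complex and then describing what happens when its two distinguished cone vertices $\mathbf{0}$ and $\mathbf{1}$ are deleted.

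For the first step I would invoke Remark~\ref{rem:intersection-of-P_F}: contracting each connected component of the spanning subhypergraph $([n],S)$ to a single point yields a simplicial isomorphism
\[
\bigcap_{F\in S} P_F \;\cong\; C^{b},
\]
where $C^b$ is the braid triangulation of the unit $b$-cube and $b$ is the total number of connected components of $([n],S)$. Accounting for both the $m$ non-trivial components of $H_S$ and the $n-\sum_{i=1}^m n_S^{(i)}$ isolated vertices of $[n]\setminus V_{H_S}$ gives
\[
b \;=\; m + n - \sum_{i=1}^m n_S^{(i)} \;=\; d_S+2.
\]

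For the second step I would identify $C^b\setminus\{\mathbf{0},\mathbf{1}\}$ topologically as follows. The vertices of $C^b$ are naturally indexed by subsets of $[b]$ (one coordinate per component of $([n],S)$), its faces are the chains in the Boolean lattice $2^{[b]}$, and $\mathbf{0}$ and $\mathbf{1}$ correspond to $\emptyset$ and $[b]$. Hence $C^b\setminus\{\mathbf{0},\mathbf{1}\}$ is exactly the order complex of the proper part $2^{[b]}\setminus\{\emptyset,[b]\}$, i.e., the barycentric subdivision of the boundary of the simplex on vertex set $[b]$; this is homeomorphic to $S^{b-2}=S^{d_S}$, as required. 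The only substantive ingredient is Remark~\ref{rem:intersection-of-P_F}; beyond that, the main point to be careful about is the bookkeeping distinction between connected components of $H_S$ (which ignores isolated vertices) and components of $([n],S)$, accounted for by the correction term $n-\sum_{i=1}^m n_S^{(i)}$.
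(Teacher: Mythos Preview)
Your argument is correct. It is, however, a genuinely different route from the paper's own proof. The paper works entirely in the combinatorial picture of ordered set partitions: it directly characterizes the facets of $Q=\bigcap_{F\in S}Q_F$ by observing that vertices of $H_S$ lying in the same connected component must sit in the same block, so a maximal face of $Q$ has one block per component of $H_S$ and singletons for the remaining vertices; it then cites the argument of Example~\ref{Ex1}(ii) and \cite[Theorem~14]{Steingrimsson} to identify $Q$ with $\sd(\partial\Delta^{d_S+1})$. Your approach instead passes through the Ehrhart/cube model of Section~\ref{subsect:ColEhrhart}, invokes Remark~\ref{rem:intersection-of-P_F} to identify $\bigcap_{F\in S}P_F$ with $C^{d_S+2}$, and then deletes the two cone vertices. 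Both arguments boil down to ``contract the components of $([n],S)$'', but yours is more economical because it reuses the machinery already set up in Section~\ref{sec:ehrhart}; the paper's proof is more self-contained and stays within the combinatorial language of Section~\ref{sec:combinatorial}. One small point you leave implicit and should make explicit: the isomorphism of Remark~\ref{rem:intersection-of-P_F} (constructed by contraction as in the proof of Proposition~\ref{prop:f-of-cube}) sends the vertices $\mathbf{0},\mathbf{1}$ of $\bigcap_{F\in S}P_F$ to the vertices $\mathbf{0},\mathbf{1}$ of $C^{d_S+2}$, which is what justifies passing from $(\bigcap_{F\in S}P_F)\setminus\{\mathbf{0},\mathbf{1}\}$ to $C^{d_S+2}\setminus\{\mathbf{0},\mathbf{1}\}$.
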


\begin{proof}
Let $Q=\bigcap_{F\in S}Q_F$. In the following, we will characterize maximal faces of $Q$. 
Consider a maximal face $B=B_1|B_2|\cdots|B_r\in Q$. 
First note that vertices of $H_S$, belonging to the same connected component of $H_S$, have to lie in the same block. 
Since $B$ is a maximal face, this in particular means, that for each connected component $H_S^{(i)}$ of $H_S$, there exists a block 
$B_l$ of $B$ containing exactly the vertices of $H_S^{(i)}$. Again, by maximality of $B$, we know that the remaining blocks of 
$B$ have to be singletons. 
Altogether, we conclude that a facet of $Q$ consists of $m+(n-\sum_{i=1}^m n_S^{(i)})$ blocks and therefore $Q$ has to 
be of dimension $n-\sum_{i=1}^m n_S^{(i)}+m-2=d_S$. 
Moreover, by the same arguments as in Example \ref{Ex1} (ii) and \cite[Theorem 14]{Steingrimsson} it follows that 
$Q$ is simplicially isomorphic to the barycentric subdivision of the boundary of an $(n-\sum_{i=1}^m n_F^{(i)}+m-1)$-simplex 
and as such homeomorphic to an $(n-\sum_{i=1}^m n_F^{(i)}+m-2)$-sphere.
\end{proof}

As a direct consequence of the above lemma we get the following behavior of intersections of pairs of edge spheres.

\begin{remark}\label{rem:intersect}
Let $H=([n],E)$ be a hypergraph and let $F$, $F'\in E$ be two edges of $H$. 
By Lemma \ref{lem:intersection} their edge spheres, $Q_F$ and $Q_{F'}$, intersect in 
a sphere of dimension $n-\#F-\#F'$ and of dimension $n-\#(F\cup F')-1$, if $F$ and $F'$ are disjoint and share at least 
one common vertex, respectively. This means that in contrast to the situation for coloring complexes of ordinary graphs, the 
codimension of these intersections can become arbitrarily large. In particular, $Q_F\cap Q_F'=\emptyset$ if 
and only if $F\cup F'=[n]$ and $F\cap F'\neq \emptyset$.
\end{remark}

Now, consider two subsets $F_1$ and $F_2$ of the edge set of $H$ and 
let $p_{F_1}$ and $p_{F_2}\in P^H$ be the corresponding elements of the intersection poset $P^H$.
If $p_{F_1}< p_{F_2}$, then it directly follows from  
Lemma \ref{lem:intersection} that the inclusion map 
$U_{p_{F_2}}\hookrightarrow U_{p_{F_1}}$  
is just the inclusion of a $d_{F_2}$-sphere into a $d_{F_1}$-sphere and 
as such this map is homotopic to a constant map. 
Finally, the application of the ``Wedge Lemma'' yields the following proposition:

\begin{proposition}\label{prop:homotopy}
 Let $H=([n],E)$ be a hypergraph. Then the hypergraph coloring complex $\Delta_H$ is homotopy equivalent to
\[
 \bigvee_{p\in P^{H}}S^{d_p}\ast \Delta(P^{H}_{<p}),
\]
where $d_p$ is defined as in Lemma \ref{lem:intersection}.
\end{proposition}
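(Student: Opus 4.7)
The plan is to apply the Wedge Lemma directly to the covering $\U^H = (Q_F)_{F\in E}$ of $\Delta_H$. Since by construction $\Delta_H = \bigcup_{F\in E} Q_F$, this is indeed a covering by closed subcomplexes, and its intersection poset is precisely $P^H$. For each element $p\in P^H$ corresponding to a subset $S_p\subseteq E$, Lemma~\ref{lem:intersection} identifies $U_p = \bigcap_{F\in S_p} Q_F$ with a sphere of dimension $d_p = n-\sum_{i=1}^{m_p} n_{S_p}^{(i)}+m_p-2$, where the sum runs over the connected components of $H_{S_p}$. Thus the wedge in the conclusion of the Wedge Lemma will take the form $\bigvee_{p\in P^H} S^{d_p}\ast \Delta(P^H_{<p})$, as desired.

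The key step is to verify the homotopy hypothesis of the Wedge Lemma, namely that for every $p<q$ in $P^H$ the inclusion $U_q \hookrightarrow U_p$ is null-homotopic. Since both $U_p$ and $U_q$ are spheres (of dimensions $d_p$ and $d_q$ respectively), it suffices to establish the strict inequality $d_q < d_p$, because any continuous map $S^{d_q}\to S^{d_p}$ with $d_q<d_p$ is null-homotopic (spheres are highly connected in low degrees). I would verify the strict inequality directly from the formula in Lemma~\ref{lem:intersection}: passing from $S_p$ to a strictly larger $S_q$ means adjoining at least one new edge $F$, and this either merges two previously distinct components of $H_{S_p}$ (decreasing $m$ by $1$ while preserving $\sum n^{(i)}$, hence decreasing $d$ by $1$), extends an existing component by some new vertices (increasing $\sum n^{(i)}$ and leaving $m$ unchanged, hence strictly decreasing $d$), or creates a brand new component (increasing $\sum n^{(i)}$ by $|F|\geq 2$ and $m$ by $1$, so $d$ strictly decreases again). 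In all cases $d_q<d_p$, so the null-homotopy hypothesis is satisfied.

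The main potential obstacle is ensuring that one can choose basepoints $c_p\in U_p$ coherently so that the null-homotopies fit together as the Wedge Lemma demands, but since each $U_p$ is a sphere and the inclusions into higher-dimensional spheres are automatically null-homotopic to any chosen basepoint, this poses no real difficulty; one simply picks an arbitrary $c_p\in U_p$ for every $p$. Having verified the hypotheses, the Wedge Lemma then yields
\[
\Delta_H \; \simeq \; \bigvee_{p\in P^H} \Delta(P^H_{<p})\ast U_p \; = \; \bigvee_{p\in P^H} S^{d_p}\ast \Delta(P^H_{<p}),
\]
completing the proof.
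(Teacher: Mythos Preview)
Your proposal is correct and follows essentially the same route as the paper: cover $\Delta_H$ by the edge spheres $Q_F$, identify each intersection $U_p$ as a sphere via Lemma~\ref{lem:intersection}, check that the inclusions $U_q\hookrightarrow U_p$ are null-homotopic, and invoke the Wedge Lemma. The paper's argument is terser---it simply asserts that the inclusion of a $d_q$-sphere into a $d_p$-sphere is homotopic to a constant map---whereas you supply the supporting inequality $d_q<d_p$ via a case analysis on how adjoining an edge affects the component structure; your cases are not literally exhaustive (an added edge can merge several components and introduce new vertices simultaneously), but the conclusion holds in every case since the number of parts in the induced partition of $[n]$ strictly drops whenever the intersection strictly shrinks.
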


It is clear from Proposition \ref{prop:homotopy} that the homotopy type of the coloring complex only depends 
on the order complexes of the lower intervals $P_{<p}$ in the intersection lattice $P^{h}$. The only 
thing we can generally say about those intervals is that the closed intervals $P_{\leq p}$ themselves are intersection lattices of 
coloring complexes of subhypergraphs of $H$ (having edges corresponding to the elements in the intersection $p$).

\subsection{Connectedness}

In this section, we are dealing with connectedness of hypergraph coloring complexes. Though coloring complexes of ordinary graphs 
are always connected, this property breaks down if one considers hypergraphs. But it is still possible to 
give a unique characterization of those hypergraphs which are connected. Moreover, we can construct hypergraphs whose 
coloring complexes have arbitrarily many connected components.

In order to give a necessary and sufficient criterion for the hypergraph coloring complex to be connected we 
need the following lemma which is a direct consequence of the discussion in Remark~\ref{rem:intersect}.

\begin{lemma}\label{lem:edges}
 Let $H=([n],E)$ be a hypergraph and let $F,F'\in E$ be two edges. Then $Q_F \cap Q_{F'} = \emptyset$ 
if and only if $F \cup F' = [n]$ and $F \cap F' \neq \emptyset$.
\end{lemma}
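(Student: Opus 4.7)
The plan is to derive Lemma \ref{lem:edges} directly from Lemma \ref{lem:intersection} by specializing to $S = \{F, F'\}$ and asking precisely when the dimension $d_S$ of the intersection sphere is $-1$. A glance at the proof of Lemma \ref{lem:intersection} shows that the intersection $Q_F \cap Q_{F'}$ is in fact simplicially isomorphic to $\sd(\partial \Delta^{d_S + 1})$; this is nonempty as a simplicial complex on vertices iff $d_S \geq 0$, and it reduces to $\sd(\partial \Delta^0) = \emptyset$ exactly when $d_S = -1$.

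First I would handle the disjoint case: if $F \cap F' = \emptyset$, then $H_{\{F,F'\}}$ has $m = 2$ connected components with vertex counts $\#F$ and $\#F'$, so Lemma \ref{lem:intersection} gives $d_S = n - (\#F + \#F') + 2 - 2$, which is $\geq 0$ because $F$ and $F'$ are disjoint subsets of $[n]$; hence the intersection is nonempty. In the overlapping case $F \cap F' \neq \emptyset$, the restricted hypergraph is connected on $\#(F \cup F')$ vertices, so $d_S = n - \#(F \cup F') - 1$. This is equal to $-1$ precisely when $F \cup F' = [n]$, and is nonnegative otherwise.

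Combining the two cases yields the biconditional: $Q_F \cap Q_{F'} = \emptyset$ iff $F \cap F' \neq \emptyset$ and $F \cup F' = [n]$. I do not anticipate any real obstacle, as Lemma \ref{lem:intersection} has already done the substantive work. The present lemma is simply the explicit record of when the dimension formula of Lemma \ref{lem:intersection} degenerates, already implicit in Remark \ref{rem:intersect}, and is packaged here separately because it is the precise form needed for the connectedness characterization to follow.
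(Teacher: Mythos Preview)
Your proposal is correct and follows essentially the same route as the paper: the paper states that the lemma is a direct consequence of Remark~\ref{rem:intersect}, which in turn specializes Lemma~\ref{lem:intersection} to $S=\{F,F'\}$ and reads off the two dimension formulas $n-\#F-\#F'$ (disjoint case) and $n-\#(F\cup F')-1$ (overlapping case), exactly as you do. Your only addition is the explicit remark that $d_S=-1$ corresponds to $\sd(\partial\Delta^0)=\emptyset$, which simply makes precise the convention that a $(-1)$-sphere is empty.
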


Finally, we obtain the following characterization of hypergraphs having a connected coloring complex.

\begin{proposition} 
\label{prop:connectedness}
Let $H=([n],E)$ be a hypergraph. Then the coloring complex 
$\Delta_H$ is connected if and only if for every pair of edges $F$, $F' \in  
E$ there is a sequence of edges $F = F_1, F_2, \ldots, F_r = F'$ such that 
$F_{i} \cup F_{i+1} \neq [n]$ or $F_{i} \cap F_{i+1} = \emptyset$ for $1\leq i\leq r-1$.
\end{proposition}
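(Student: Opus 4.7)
The plan is to reduce connectedness of $\Delta_H$ to connectedness of a combinatorial auxiliary graph on the edge set. Define the \emph{compatibility graph} $G_H$ on vertex set $E$ by declaring $F$ and $F'$ adjacent iff $F\cup F'\neq [n]$ or $F\cap F'=\emptyset$; by Lemma~\ref{lem:edges} this is precisely the condition $Q_F\cap Q_{F'}\neq \emptyset$. The hypothesis in the proposition then says exactly that $G_H$ is connected, so it suffices to prove $\Delta_H$ is connected if and only if $G_H$ is.

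For the necessity direction I argue by contraposition. Suppose $G_H$ is disconnected and let $E=E_1\sqcup E_2$ be a nontrivial partition with no $G_H$-edges across. Put $\Delta_i:=\bigcup_{F\in E_i}Q_F$. Every facet of $\Delta_H$ is a facet of some $Q_F$, so $\Delta_H=\Delta_1\cup\Delta_2$; Lemma~\ref{lem:edges} applied to each $F\in E_1$, $F'\in E_2$ gives $Q_F\cap Q_{F'}=\emptyset$, and thus $\Delta_1\cap \Delta_2=\emptyset$. So $\Delta_H$ splits into two nonempty closed subcomplexes and is disconnected.

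For sufficiency, assume $G_H$ is connected. By Lemma~\ref{lem:intersection} (with $S=\{F\}$) each edge sphere $Q_F$ is a sphere of dimension $n-|F|-1$ and is therefore path-connected whenever $|F|\leq n-2$. Given two facets $B\in Q_F$ and $B'\in Q_{F'}$ of $\Delta_H$, pick a $G_H$-path $F=F_1,F_2,\ldots,F_r=F'$ and, for $1\leq i\leq r-1$, choose a vertex $p_i\in Q_{F_i}\cap Q_{F_{i+1}}$, which is nonempty by construction. Concatenating a path from $B$ to $p_1$ inside $Q_{F_1}$, paths from $p_i$ to $p_{i+1}$ inside $Q_{F_{i+1}}$ for $1\leq i\leq r-2$, and a path from $p_{r-1}$ to $B'$ inside $Q_{F_r}$ produces a path in $\Delta_H$ from $B$ to $B'$.

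The main obstacle is the borderline case $|F|=n-1$, where $Q_F$ is a two-point $0$-sphere, so the step ``choose a path inside $Q_F$'' is not automatic. However, Lemma~\ref{lem:edges} together with the standing no-containment hypothesis on the edges of $H$ forces any such $F$ to satisfy $F\cup F'=[n]$ and $F\cap F'\neq\emptyset$ for every other edge $F'$, so $F$ is an isolated vertex of $G_H$. Consequently this case is compatible with the sufficiency hypothesis only in the degenerate situation $E=\{F\}$, which is handled by a short direct check and does not affect the remainder of the argument.
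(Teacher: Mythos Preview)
Your proof is correct and takes essentially the same approach as the paper: both reduce the question, via the covering $\Delta_H=\bigcup_{F\in E}Q_F$ and Lemma~\ref{lem:edges}, to whether consecutive edge spheres in a chain intersect nontrivially. You are in fact more careful than the paper in addressing the borderline $|F|=n-1$ case (where $Q_F$ is a disconnected $0$-sphere); note, however, that under the paper's standing hypothesis that $H$ has no isolated vertices, the degenerate situation $E=\{F\}$ with $|F|=n-1$ you single out cannot arise (it would force $F=[n]$), so no separate direct check is actually needed.
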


\begin{proof}
Given two edges $F$ and $F'$ and such a sequence between them, we have that $Q_{F_{i}} \cup Q_{F_{i+1}}$ is connected by Lemma \ref{lem:edges}. 
So $\bigcup_{i=1}^{r}Q_{F_{i}}$ is connected. 
Thus, any two edge spheres are contained in the same connected component of $\Delta_{H}$ which implies that $\Delta_{H}$ is connected.
This proves one direction.

Conversely, suppose $\Delta_{H}$ is connected. 
Let $F$, $F'\in E$ be any pair of edges. Since $\Delta_H$ is connected, there exists a sequence of edges 
$F = F_1, F_2, \ldots, F_r = F'$ such that $Q_{F_i}\cap Q_{F_{i+1}}\neq \emptyset$ for $1\leq i\leq r-1$. 
By Lemma \ref{lem:edges} the latter condition is equivalent to $F_i\cup F_{i+1}=[n]$ or $F_i\cap F_{i+1}=\emptyset$ for $1\leq i\leq r-1$.
This completes the proof.
\end{proof}

We close this section with an example showing that hypergraph coloring complexes can have arbitrarily many
connected components.

\begin{example}
Let $m\geq 2$ be an integer and $(a_1,\ldots,a_m)\in \NN^m$ be a vector of positive integers. 
We assume that $a_1\geq a_2\geq \cdots \geq a_m$. Let $a:=\max(3,a_1)$ and set
\begin{equation*}
 E_i:=\{[(m-i)a]\cup\{(m-i+1)a+1,\ldots,ma-1,ma\}\cup\{j\}~:~(m-i)a+1\leq j\leq (m-i)a+a_i\}
\end{equation*}
for $1\leq i\leq m$.
Let $H$ be the hypergraph on vertex set $[ma]$ whose edge set is $E=E_1\cup\cdots \cup E_m$. 
Consider two edges $F$, $F'\in E_i$. Since $a\geq 3$, it holds that $F\cup F'\neq [n]$. Hence, it follows from
Lemma \ref{lem:edges} that $Q_F\cap Q_{F'}\neq \emptyset$. In particular, $Q_F$ and $Q_{F'}$ lie in the same 
connected component of $\Delta_H$.  
On the other hand, if $F\in E_i$ and $F'\in E_j$ for $i\neq j$, then $F\cup F'=[ma]$ and $F\cap F'\neq \emptyset$. 
From Lemma \ref{lem:edges} we infer that $Q_F\cap Q_{F'}=\emptyset$. 
To summarize, we have shown that for any pair of edges $F$, $F'\in E$, their edge spheres $Q_F$ and $Q_{F'}$ 
belong to the same connected component of $\Delta_G$ if and only if there exists $1\leq i\leq m$ such that $F$, $F'\in E_i$. 
This means that the hypergraph coloring complex $\Delta_H$ of $H$ consists of $m$ connected 
components. Since $\#E_i=a_i$, for each $1\leq i\leq m$ there exists one component 
containing exactly $a_i$ edge spheres.
\end{example}

\subsection{Wedge of Spheres}

We have seen that hypergraph coloring complexes do not have many of the nice properties natural simplicial complexes often enjoy. One of the last properties that one might hope hypergraph coloring complexes to have is that if they are connected, they have the homotopy type of a wedge of spheres. Unfortunately, it turns out that, in general, even for uniform hypergraphs this property fails. 

%
In order to show this, we give a concrete example of a uniform hypergraph $H$, whose hypergraph coloring complex $\Delta_H$ is connected but which itself is not homotopy equivalent to a wedge of spheres. The underlying idea is to construct a torus out of edge spheres, as shown in Figure~\ref{fig:intuition}. The edges these spheres correspond to are shown in Figure~\ref{fig:definition}. For example, the sphere labled $A$ in Figure~\ref{fig:intuition} corresponds to the edge 12347 as shown in Figure~\ref{fig:definition}.

\begin{figure}[ht]
\begin{center}
\includegraphics[width=14cm]{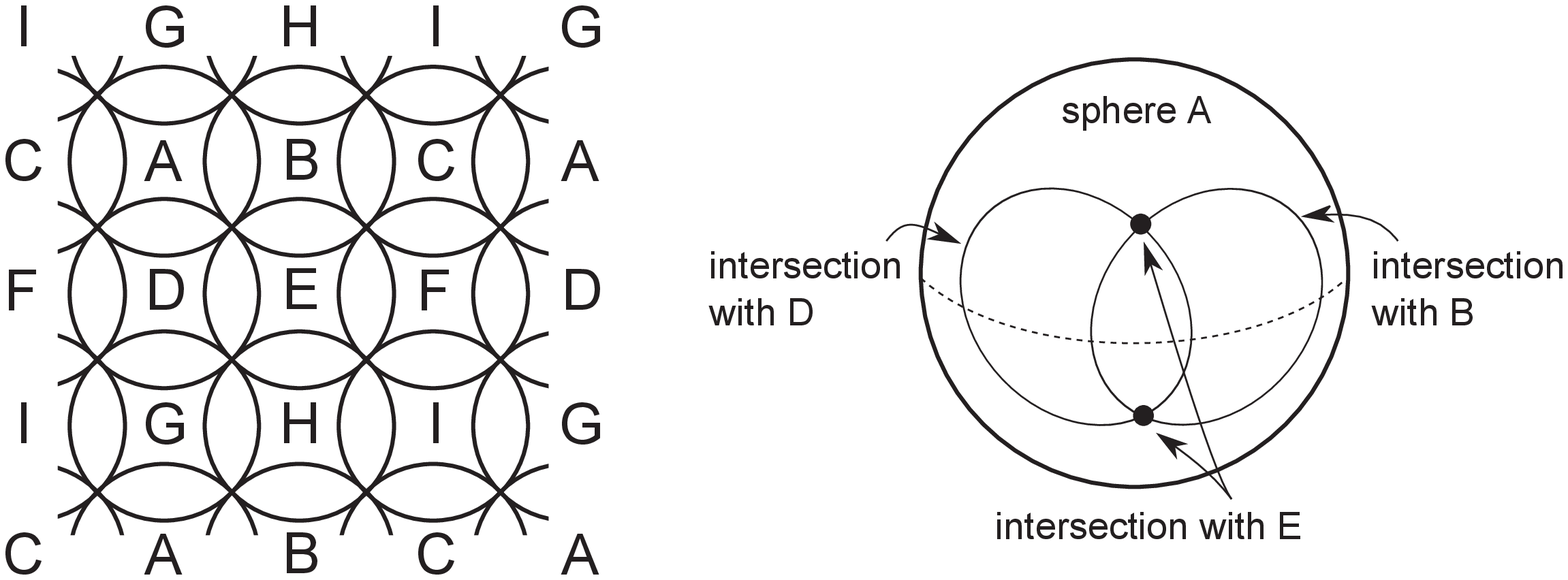}
\end{center}
\caption{\label{fig:intuition}The intuitive motivation for the construction of $\Delta_H$.}
\end{figure}

\begin{figure}[ht]
\begin{center}
\includegraphics[width=6cm]{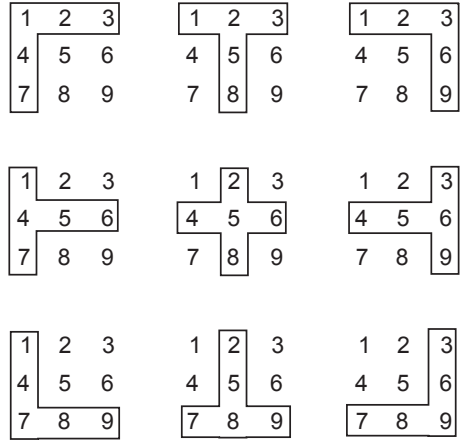}
\end{center}
\caption{\label{fig:definition}The edges of $H$ arranged to match the illustration in Figure~\ref{fig:intuition}.}
\end{figure}

\begin{example}
More precisely, consider the hypergraph $H=([9],E)$ with 
\[
  E=\{12347,12358,12369,14567,24568,34569,14789,25789,36789\}.
\]
It is easily seen that $\Delta_H$ is connected. However, as we will show $\Delta_H$ does not have the homotopy type of a wedge of spheres. For this aim, we show that the cup product defined on the cohomology groups of $\Delta_H$ is non-trivial. First, using the mathematical software system Sage \cite{Sage}, we computed the reduced cohomology groups of $\Delta_H$ over $\mathbb{Z}$ and obtained:
\begin{align*}
\widetilde{H}^0(\Delta_H;\mathbb{Z})&=0,\\
\widetilde{H}^1(\Delta_H;\mathbb{Z})&=\mathbb{Z}^2,\\
\widetilde{H}^2(\Delta_H;\mathbb{Z})&=\mathbb{Z}^{28},\\
\widetilde{H}^3(\Delta_H;\mathbb{Z})&=\mathbb{Z}^9.
\end{align*} 
In the next step, we implemented the computation of the cup product in cohomology in Sage. Taking two generators of $\widetilde{H}^1(\Delta_H;\mathbb{Z})$ and computing their cup product, we obtained a cohomology cycle in  $\widetilde{H}^3(\Delta_H;\mathbb{Z})$ that is not a coboundary and is thus not trivial in $\widetilde{H}^3(\Delta_H;\mathbb{Z})$. In particular, this shows that the cup product on the cohomology groups of $\Delta_H$ is not trivial and, hence, that $\Delta_H$ is not homotopy equivalent to a wedge of spheres. 
\end{example}

All edge spheres in this example are 3-dimensional. Any two edge spheres that are horizontally or vertically adjacent, for example $P_{12358}$ and $P_{24568}$, intersect in a 1-dimensional sphere. Any two edge spheres that are diagonally adjacent, for example $P_{12347}$ and $P_{24568}$, intersect in a 0-dimensional sphere. However, any three edge spheres meet all three columns or all three rows in Figure~\ref{fig:definition} have an empty intersection. This already suggests that the coloring complex $\Delta_H$ does indeed have the structure suggested by Figure~\ref{fig:intuition}.

We can summarize the results of this section in the following proposition.

\begin{proposition}
There exist uniform hypergraphs whose hypergraph coloring complexes are not homotopy equivalent to a wedge of spheres. 
\end{proposition}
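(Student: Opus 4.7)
The plan is to exhibit the explicit $5$-uniform hypergraph $H=([9],E)$ described above and to show that its coloring complex $\Delta_H$ is connected but does not have the homotopy type of a wedge of spheres. Uniformity is immediate from inspection of $E$, and connectedness follows from Proposition~\ref{prop:connectedness}: one checks that for any two of the nine edges, their union misses at least one vertex of $[9]$ or they are disjoint. The $3\times 3$ layout in Figure~\ref{fig:definition} makes this easy to verify, since edges in the same row (or same column) share a large common set, while edges in different rows and columns are arranged so that the connectedness criterion still holds via intermediate edges.

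The crucial ring-theoretic observation is that a wedge of spheres has trivial reduced cup product in positive degrees. Indeed, for $X=\bigvee_\alpha S^{n_\alpha}$, the wedge-summand retractions induce a ring isomorphism $\widetilde{H}^*(X)\cong\bigoplus_\alpha \widetilde{H}^*(S^{n_\alpha})$, and on each summand any product of two positive-degree classes vanishes for degree reasons. Consequently, to prove the proposition it suffices to exhibit two classes $\alpha,\beta\in\widetilde{H}^{>0}(\Delta_H;\mathbb{Z})$ with $\alpha\smile\beta\neq 0$.

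The remaining work is a finite check: construct the simplicial cochain complex of $\Delta_H$, verify the cohomology ranks (one expects $\widetilde{H}^1(\Delta_H;\mathbb{Z})\cong\mathbb{Z}^2$ and $\widetilde{H}^3(\Delta_H;\mathbb{Z})\cong\mathbb{Z}^9$), select representative cocycles for a basis of $\widetilde{H}^1$, and evaluate their Alexander--Whitney cup product, showing that the resulting $3$-cocycle is not a coboundary. The chief obstacle is purely the size of $\Delta_H$: it has many $3$-faces and the boundary maps are large sparse integer matrices, so hand computation is infeasible. I would therefore implement the cup product on simplicial cochains in Sage and carry out the verification by machine, as is done in the example.

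Heuristically, the construction was engineered so that the nine $3$-dimensional edge spheres of $\Delta_H$ assemble as in Figure~\ref{fig:intuition}: horizontally or vertically adjacent spheres intersect in a $1$-sphere, diagonally adjacent ones in a $0$-sphere, and any triple spanning all three rows or all three columns has empty intersection. Since this intersection combinatorics mirrors a CW-structure on the torus $T^2$, one expects the resulting complex to inherit the torus's non-trivial pairing $H^1\otimes H^1\to H^2$ (shifted by the suspension/join coming from the $S^{d_p}$ factors in the Wedge Lemma decomposition of Proposition~\ref{prop:homotopy}), producing a non-zero class in $H^3$; this provides the geometric reason one should expect the cup-product computation to succeed.
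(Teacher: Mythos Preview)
Your proposal is essentially the paper's own argument: the same $5$-uniform hypergraph on $[9]$, the same connectedness check via Proposition~\ref{prop:connectedness}, and the same Sage computation of a non-trivial cup product in cohomology to rule out the homotopy type of a wedge of spheres. The additional justifications you supply (the retraction argument for why wedges of spheres have trivial cup products, and the torus-like heuristic for the intersection pattern) are useful elaborations but do not change the strategy.
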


\subsection*{Acknowledgments}
We would like to thank Volkmar Welker for helpful discussions and for bringing the Wedge Lemma to our attention. We are also grateful to two anonymous referees for comments, which helped to improve the contents of this paper. Finally, we would like to thank John H.\ Palmieri for answering several questions about the implementation of cohomology groups in Sage.

Felix Breuer was partially supported by grants HA 4383/1 and BR 4251/1-1 of the German Research Foundation (DFG). 
Aaron Dall was partially supported by the Spanish Ministry of Science and Innovation grant BES-2010-030080. Martina Kubitzke was supported by the Austrian Science Foundation (FWF) through grant Y463-N13.

\bibliographystyle{plain} 
\bibliography{citations}

\begin{thebibliography}{10}

\bibitem{oeis}
The on-line encyclopedia of integer sequences.
\newblock published electronically at http://oeis.org, 2011.

\bibitem{BabsonKozlov2006}
Eric Babson and Dmitry~N. Kozlov.
\newblock Complexes of graph homomorphisms.
\newblock {\em Israel J. Math.}, 152:285--312, 2006.

\bibitem{BabsonKozlov2007}
Eric Babson and Dmitry~N. Kozlov.
\newblock Proof of the {L}ov\'asz conjecture.
\newblock {\em Ann. of Math. (2)}, 165(3):965--1007, 2007.

\bibitem{BeckRobins07}
Matthias Beck and Sinai Robins.
\newblock {\em Computing the continuous discretely}.
\newblock Undergraduate Texts in Mathematics. Springer, New York, 2007.

\bibitem{BeckZaslavsky06a}
Matthias Beck and Thomas Zaslavsky.
\newblock Inside-out polytopes.
\newblock {\em Adv. Math.}, 205(1):134--162, 2006.

\bibitem{BeckZaslavsky06b}
Matthias Beck and Thomas Zaslavsky.
\newblock The number of nowhere-zero flows on graphs and signed graphs.
\newblock {\em J. Combin. Theory Ser. B}, 96(6):901--918, 2006.

\bibitem{Birkhoff1912}
George~D. Birkhoff.
\newblock A determinant formula for the number of ways of coloring a map.
\newblock {\em The Annals of Mathematics}, 14(1/4):42--46, 1912.

\bibitem{breuer-diss}
Felix Breuer.
\newblock {\em {Ham Sandwiches, Staircases and Counting Polynomials}}.
\newblock Dissertation, Freie Universit\"{a}t Berlin, 2009.

\bibitem{breuer-dall2}
Felix Breuer and Aaron Dall.
\newblock {Bounds on the Coefficients of Tension and Flow Polynomials}.
\newblock {\em Journal of Algebraic Combinatorics}, 2010.
\newblock Zur Ver\"offentlichung angenommen am 31.8.2010.

\bibitem{breuer-dall1}
Felix Breuer and Aaron Dall.
\newblock {Viewing counting polynomials as Hilbert functions via Ehrhart
  theory}.
\newblock In {\em 22nd International Conference on Formal Power Series and
  Algebraic Combinatorics (FPSAC 2010)}, pages 413--424. DMTCS, 2010.

\bibitem{breuer-sanyal}
Felix Breuer and Raman Sanyal.
\newblock {Ehrhart theory, Modular flow reciprocity, and the Tutte polynomial}.
\newblock {\em Mathematische Zeitschrift}, 2010.
\newblock Zur Ver\"offentlichung angenommen am 13.9.2010.

\bibitem{BH-book}
W.\ Bruns and J.\ Herzog.
\newblock {\em Cohen-{M}acaulay rings. {R}ev. ed.}, volume~39 of {\em Cambridge
  Studies in Advanced Mathematics}.
\newblock Cambridge University Press, 1998.

\bibitem{Hanlon}
Phil Hanlon.
\newblock A {H}odge decomposition interpretation for the coefficients of the
  chromatic polynomial.
\newblock {\em Proc. Am. Math. Soc.}, 136(11):3741--3749, 2008.

\bibitem{HershSwartzCol}
Patricia Hersh and Ed~Swartz.
\newblock Coloring complexes and arrangements.
\newblock {\em J. Algebraic Comb.}, 27(2):205--214, 2008.

\bibitem{HRW-Koszul}
J\"urgen Herzog, Vic Reiner, and Volkmar Welker.
\newblock The {K}oszul property in affine semigroup rings.
\newblock {\em Pacific J. Math}, 186:39--65, 1997.

\bibitem{Hultman}
Axel Hultman.
\newblock Link complexes of subspace arrangements.
\newblock {\em European Journal of Combinatorics}, 28(3):781--790, 2007.

\bibitem{Jonsson}
Jakob Jonsson.
\newblock The topology of the coloring complex.
\newblock {\em J. Algebraic Combin.}, 21(3):311--329, 2005.

\bibitem{LongRundell}
Jane~Holsapple Long and Sarah~Crown Rundell.
\newblock The {H}odge structure of the coloring complex of a hypergraph.
\newblock {\em Discrete Math.}, 311(20):2164--2173, 2011.

\bibitem{Lovasz}
L.~Lov\'{a}sz.
\newblock Kneser's conjecture, chromatic number, and homotopy.
\newblock {\em Journal of Combinatorial Theory, Series A}, 25(3):319 -- 324,
  1978.

\bibitem{Stanley96}
Richard~P. Stanley.
\newblock {\em Combinatorics and Commutative Algebra}, volume~41 of {\em
  Progress in Mathematics}.
\newblock Birkh{\"{a}}user, second edition edition, 1996.

\bibitem{Sage}
W.A. Stein et~al.
\newblock {\em {S}age {M}athematics {S}oftware ({V}ersion 4.7.2)}.
\newblock The Sage Development Team, 2011.
\newblock {\tt http://www.sagemath.org}.

\bibitem{Steingrimsson}
Einar Steingr\'{i}msson.
\newblock The coloring ideal and coloring complex of a graph.
\newblock {\em J. Algebraic Combin}, 14(1):73--84, 2001.

\bibitem{Wilf}
Herbert~S. Wilf.
\newblock Which polynomials are chromatic?
\newblock In {\em Colloquio {I}nternazionale sulle {T}eorie {C}ombinatorie
  ({R}oma, 1973), {T}omo {I}}, pages 247--256. Atti dei Convegni Lincei, No.
  17. Accad. Naz. Lincei, Rome, 1976.

\bibitem{XieLiu}
Li~Tong Xie and Gui~Zhen Liu.
\newblock Neighborhood complexes of graphs.
\newblock {\em Shandong Daxue Xuebao Ziran Kexue Ban}, 28(1):40--44, 1993.

\bibitem{Ziegler95}
G{\"{u}}nter~M. Ziegler.
\newblock {\em Lectures on Polytopes}, volume 152 of {\em Graduate Texts in
  Mathematics}.
\newblock Springer-Verlag, 1995.

\bibitem{ZZ}
G\"unter~M. Ziegler and Rade~T. \v{Z}ivaljevi\'c.
\newblock Homotopy types of subspace arrangements via diagrams of spaces.
\newblock {\em Mathematische Annalen}, 295:527--548, 1993.

\end{thebibliography}

\end{document}